\newtheorem{thm}{Theorem}[section]
\newtheorem{lem}{Lemma}[section]
\newtheorem{cor}{Corollary}[section]
\newtheorem{cond}{Condition}[section]
\newtheorem*{remark}{Remark}
\begin{document}
%
\title{Adaptive Variational Bayesian Inference for Sparse Deep Neural Network}
%
%
%
%

\author{Jincheng~Bai,
        Qifan~Song,
        and~Guang~Cheng
\IEEEcompsocitemizethanks{\IEEEcompsocthanksitem The authors are with the Department
of Statistics, Purdue University, West Lafayette,
IN, 47906.\protect\\
E-mail: \{bai45, qfsong, chengg\}@purdue.edu}
}

\IEEEtitleabstractindextext{%
\begin{abstract}
In this work, we focus on variational Bayesian inference on the sparse Deep Neural Network (DNN) modeled under a class of spike-and-slab priors. Given a pre-specified sparse DNN structure, the corresponding variational posterior contraction rate is characterized that reveals a trade-off between the variational error and the approximation error, which are both determined by the network structural complexity (i.e., depth, width and sparsity). However, the optimal network structure, which strikes the balance of the aforementioned trade-off and yields the best rate, is generally unknown in reality. Therefore, our work further develops an {\em adaptive} variational inference procedure that can automatically select a reasonably good (data-dependent) network structure that achieves the best contraction rate, without knowing the optimal network structure. In particular, when the true function is H{\"o}lder smooth, the adaptive variational inference is capable to attain (near-)optimal rate without the knowledge of smoothness level. The above rate still suffers from the curse of dimensionality, and thus motivates the teacher-student setup, i.e., the true function is a sparse DNN model, under which the rate only logarithmically depends on the input dimension. 
\end{abstract}

\begin{IEEEkeywords}
Auto-ML, sparse deep learning, variational inference, contraction rate.
\end{IEEEkeywords}}

\maketitle

\IEEEdisplaynontitleabstractindextext

%
\IEEEpeerreviewmaketitle

\section{Introduction}

Deep Neural Networks (DNNs) have achieved tremendous successes in AI fields such as computer vision, natural language processing and reinforcement learning. 
One crucial factor for the successes of DNN is that it possesses highly complex and nonlinear model architecture, which allows it to approximate almost any complicated function \cite{Cybenko1989Approximation,Rolnick2018power,Mhasker2017when}.

However, large and deep fully connected networks are memory demanding \cite{Srivastava15dropout} and also slow in inference for some real time tasks, which sheds the light in the use of  sparse neural nets. Meanwhile, sparse neural nets have been shown to have accurate approximation and strong generalization power \cite{Glorot2011Deep, Goodfellow2016Deep}. For example, the popular Dropout regularization \cite{Srivastava15dropout} could be interpreted as averaging over $l_0$ regularized sparse neural nets. From a nonparametric perspective, \cite{Schmidt-Hieber2017Nonparametric} showed that sparse DNN with a ReLU activation function could achieve nearly minimax rate in the regression setup.

Bayesian neural nets (BNN) are perceived to perform well against overfitting due to its regularization nature by enforcing a prior distribution. The study of Bayesian neural nets could date back to \cite{Mackay1992practical}, \cite{Neal1993Bayesian}. In particular, a spike-and-slab prior \cite{George1993variable} would switch a certain neuron off, and thus in nature imposes $l_0$ regularization and encourages network sparsity. \cite{Polson2018posterior} introduced the Spike-and-Slab Deep Learning as a fully Bayesian alternative to Dropout for improving the generalizability of DNN with ReLU activation, where the posterior distribution is proven to concentrate at a nearly minimax rate.  

However, a well-known obstacle for Bayesian inference is its high computational cost for drawing samples from posterior distribution via Markov chain Monte Carlo (MCMC). A popular alternative - Variational Inference (VI) or Variational Bayes (VB) \cite{Jordan1999Variational} - approximates the true posterior distribution by a simpler family of distributions through an optimization over the Evidence Lower Bound (ELBO). 
As a computationally efficient method, VI has been used widely for neural networks \cite{graves2011practical,Kingma2014VAE, Rezende2014Stochastic, Blundell2015weight}.
However, statistical properties of VI have not been carefully studied only until recently \cite{Pati2018on, Alquier2017Concentration, Wang2019Frequentist}, and the convergence property for variational BNN remains much less explored. Specifically, it would be interesting to examine whether the variational inference leads to the same rate of convergence
compared to the Bayesian posterior distribution and frequentist estimators. \cite{Cherief2019Convergence}
attempts to provide theoretical justifications for variational inference on BNN but only for an inflated {\em tempered posterior} \cite{Bhattacharya2019Bayesian} rather than the true posterior.

In this paper, we directly investigate the theoretical behavior of variational posterior for Bayesian DNN under spike-and-slab modeling. 
Our specific goals are to understand how fast the variational posterior converges to the truth and how accurate the prediction carried out by variational inferences is. It is not surprising that the choice of the network structure, i.e., network depth, width and sparsity level, plays a crucial role for the success of variational inference. Notably, there exists a trade-off phenomenon for the choice of network architecture: an overly complex structure leads to a large variational error, while an overly simplified network may not be able to capture the nonlinear feature of true underlying regression function (i.e., large approximation error).


The optimal network structure, which yields the best contraction rate, is generally unknown in reality. This motivates us to develop an {\em adaptive} variational inference procedure that performs automatic variational architecture selection based on the penalized ELBO criterion. The selection procedure could lead to a data-dependent network structure that achieves the same best rate as if it were derived under the optimal structure choice.

The developed general theory is further applied to two particular examples, where the true underlying function 1) is H{\"o}lder smooth, or 2) exactly corresponds to some unknown sparse DNN model. For the formal case, we show that if the smoothness level is known, the variational posterior possesses minimax contraction rate (up to a logarithm factor) when the network structure is carefully chosen based on the known smoothness level. Even when the smoothness level is unknown, the proposed adaptive variational inference procedure still leads to the same theoretical guarantee. For the latter case, we find that the rate of convergence doesn't suffer from the curse of dimensionality, in the sense that the input dimension has at most a logarithmic effect to the convergence rate.

It is worth noting that the focus of this paper lies on the theory of variational inference on sparse DNN, and the prior used for deriving the theoretical results leads to intractable ELBO optimization.
Although the variational inferences could be implemented by utilizing certain approximation, as illustrated in the supplementary material, computation-friendly priors will be developed in the future work.


\subsection{Related work}
There exists a rich literiture of sparsifying DNN based on ``train-and-prune" strategy 
\cite{Blundell2015weight, Molchanov2017Variational, Louizos2017Bayesian, ghosh2017Model, Gale2019state}. This class of approaches first train a fully connected (usually over-parameterized) DNN, and then attempt to sparsify it by pruning connections with “small” weights. The pruning could be either based on absolute magnitude of the weights, or based on variational distribution of the weights. Comparing to our method that directly induces sparsity, the “train-and-prune” strategy usually requires at least an additional round of training after pruning; and to the best of our knowledge, there is no theoretical justification available for such “train-and-prune” approaches yet. Moreover, we would like to mention that there is no (theoretically guaranteed) adaptive way to determine the optimal pruning rate along this line of works. In contrast, our approach is one shot and can be incorporated with adaptive priors to automatically choose the optimal sparsity level.

\subsection{Notations}
Throughout this paper, the following notations are used.
Denote $\mbox{KL}(\cdot\|\cdot)$ and $d(\cdot, \cdot)$ as the KL divergence and Hellinger distance between two probability measures, respectively. For a vector $\boldsymbol{x}=(x_1, \ldots, x_m)^T$, we define $\|\boldsymbol{x}\|_{\infty}:=\max^m_{i=1}|x_i|$, $\|\boldsymbol{x}\|_0:=\sum^m_{i=1}I(x_i \neq 0)$, $\|\boldsymbol{x}\|_p:=(\sum^m_{i=1}|x_i|^p)^{1/p}$ for $p>0$. For any Lebesgue integrable function $f$, we denote the $L_p$ norm for $f$ as $\|f\|_p:=(\int f^p)^{1/p}$ and $\|f\|_{\infty}:=\sup_{y \in \mathcal{Y}}|f(y)|$.

\section{Nonparametric Regression Via Bayesian Deep Learning}{\label{secNonparam}}
Consider a nonparametric regression model with random covariates $X_i \sim \mathcal{U}([-1,1]^p)$\footnote{The bounded support assumption is common in the literature (\cite{Schmidt-Hieber2017Nonparametric, Polson2018posterior}) and applies to standardized data.} and 
\begin{equation}{\label{eqmod1}}
   Y_i = f_0(X_i)+\epsilon_i, \: i=1,\ldots,n 
\end{equation}
where $\mathcal{U}$ denotes the uniform distribution, $\epsilon_i\overset{iid}{\sim}\mbox{N}(0,\sigma^2_{\epsilon})$ is the noise term, and $f_0:[-1,1]^p\rightarrow \mathbb{R}$ is the underlying true function. For simplicity of the analysis, we assume that $\sigma_{\epsilon}$ is a known constant, while in practice we could use the empirical Bayes method or full Bayes method (by placing an Inverse-Gamma prior on $\sigma_{\epsilon}$) to estimate it.

\subsection{Deep neural networks}

An ($L$-1)-hidden-layer ReLU neural network is used to model the data. 
The number of neurons in each hidden layer is denoted by $p_i$ for $i=1,\dots, L-1$. The weight matrix and bias parameters in each layer are denoted by $W_i\in\mathbb{R}^{p_{i-1}\times p_{i}}$ and $b_i\in\mathbb{R}^{p_{i}}$ for $i=1,\dots,L$.
Let $\sigma(x)=\max(0,x)$ be the ReLU activation function, and for any $r\in\mathbb{Z}^+$ and any $b\in\mathbb{R}^r$, we define $\sigma_b: \mathbb{R}^r \rightarrow \mathbb{R}^r$ as
$\sigma_b(y_i)=\sigma(y_i - b_i)$, for $i=1,\ldots, r$.
Therefore, given parameters $\boldsymbol{p}=(p_1,\dots,p_{L-1})'$ and $\theta = \{W_1, b_1,\ldots, W_{L}, b_{L}\}$,
the output of this DNN model can be written as 
\begin{equation}{\label{eqmod2}}
    f_{\theta}(X)=W_{L}\sigma_{b_{L}}(W_{L}\sigma_{b_{L-1}}\ldots \sigma_{b_1}(W_1X))+b_L.
\end{equation}
In what follows, with slight abuse of notation, $\theta$ is also viewed as a vector that contains all the coefficients in $W$'s and $b$'s, and its length is denoted by $H$, i.e., $\theta=(\theta_1,\dots,\theta_H)'$.


\subsection{Regularization via spike-and-slab prior}
Instead of using a fully connected neural net, i.e., $\theta$ is a dense vector, we consider a sparse NN $f_{\theta} \in \mathcal{F}(L,\boldsymbol{p},s)$, where
$$
\mathcal{F}(L,\boldsymbol{p},s)=\{f_{\theta} \mbox{  as in (\ref{eqmod2})}: \|\theta\|_{0}\leq s \},$$
$s \in \mathbb{N}$ controls the sparsity level of NN connectivity. The set of $\theta$ under the constraint $\mathcal{F}(L,\boldsymbol{p},s)$ is denoted as $\Theta(L,\boldsymbol{p},s)$. 


Given a specified sparse network configuration, we impose a fully Bayesian modeling with a spike-and-slab prior on $\theta$. Denoting $\delta_{0}$ as the Dirac at 0 and $\gamma = (\gamma_1, \ldots, \gamma_H)$ as a binary vector indicating the inclusion of each edge in the network. The prior distribution $\pi(\theta)$ thus follows:
\begin{equation}\label{prior}
\begin{split}
\pi(\theta_i|\gamma_i) = \gamma_i\mathcal{M}_0(\theta_i)+(1-\gamma_i)\delta_{0},
\: \pi(\gamma)\propto 1\{\sum \gamma_i=s\}
\end{split}
\end{equation}
for $1\leq i\leq H$, where we assign uniform prior over all possible $s$-sparse network structures, and the slab distribution
$\mathcal{M}_0(\theta_i)$ is either a uniform  distribution $\mathcal{U}([-B_0,B_0])$ or a Gaussian distribution $\mathcal{N}(0, \sigma^2_0)$ with predetermined constant $B_0>1$ and $\sigma^2_0>0$.
Our developed theory holds for both uniform slab and Gaussian slab modeling. 

We denote $D_i=(X_i, Y_i)$ and $\boldsymbol{D}^{(n)}=(D_1,\dots, D_n)$ as the observations.
Let $P_0$ denote the underlying probability measure of data, and $p_0$ denote the corresponding density function, i.e., $p_0(D_i)=\phi([Y_i-f_0(X_i)]/\sigma_{\varepsilon})/\sigma_{\varepsilon}$ where $\phi$ is the normal pdf. Similarly, let $P_\theta$ and $p_\theta$ be the distribution and density functions induced by the parametric NN model ($\ref{eqmod2}$).
Thus, the posterior distribution is written as
$\pi(\theta|\boldsymbol{D}^{(n)})\propto \pi(\theta) \cdot p_{\theta}(\boldsymbol{D}^{(n)}).$

\section{Variational Inference}
In the framework of variational inference, one seeks to find a good approximation of the posterior $\pi(\theta|\boldsymbol{D}^{(n)})$ via optimization rather than to simulate the posterior distribution by long-run Markov chain Monte Carlo. Given a variational family of distributions, denoted by $\mathcal Q$, the goal is to minimize the KL divergence between distributions in $\mathcal Q$ and true posterior distribution: 
\begin{equation}\label{eqvb1}
\widehat{q}(\theta) = \arg\min_{q(\theta) \in \mathcal{Q}}\mbox{KL}(q(\theta)\|\pi(\theta|\boldsymbol{D}^{(n)})),
\end{equation}
and the variational posterior $\widehat q(\theta)$ is subsequently used for approximated inference.
To solve the optimization problem ($\ref{eqvb1}$), we note that 
$\mbox{KL}(q(\theta)\|\pi(\theta|\boldsymbol{D}^{(n)}))=C-\Omega$, where $C$ is some constant depending on data $\boldsymbol{D}^{(n)}$ only, and $$\Omega:=\mathbb{E}_{q(\theta)}[\log\frac{p_\theta(\boldsymbol{D}^{(n)})\pi(\theta)}{q(\theta)}]$$ is the so-called Evidence Lower Bound (ELBO). Then an equivalent optimization to ($\ref{eqvb1}$) is $$
\widehat{q}(\theta) = \arg\max_{q(\theta) \in \mathcal{Q}}\Omega,
$$ which is usually conducted via gradient ascent type algorithms.


An inspiring representation of $\Omega$ is 
\begin{equation}\label{eqvb3}
\begin{split}
-\Omega = -\mathbb{E}_{q(\theta)}[\log p_{\theta}(\boldsymbol{D}^{(n)})] + \mbox{KL}(q(\theta)\|\pi(\theta)),
\end{split}
\end{equation}
where the first term in ($\ref{eqvb3}$) can be viewed as the reconstruction error \cite{Kingma2014VAE} and the second term serves as regularization. Hence the variational inference procedure tends to be minimizing the reconstruction error while being penalized against prior distribution in the sense of KL divergence.

Technically, the variational family $\mathcal{Q}$ can be chosen freely. But for the sake of efficient implementation and optimization, it is often selected as some simple distribution family. In our case, $\mathcal{Q}$ is chosen as the spike-and-slab distribution to resemble the prior distribution, i.e., for $i=1,\dots, H$,
\begin{equation}\label{vbpost}
   q(\theta_i|\gamma_i) = \gamma_i\mathcal{M}(\theta_i)+(1-\gamma_i)\delta_{0},\:
   q(\gamma_i) =\mbox{Bern}(\nu_i),
\end{equation}
where $\mathcal{M}(\theta_i)$ is either  $\mathcal{U}(l_i, u_i)$ with $-B_0 \leq l_i\leq u_i \leq B_0$ or $\mathcal{N}(\mu_i, \sigma^2_i)$ depending on the slab choice $\mathcal M_0$ in (\ref{prior}), and $0\leq \nu_i \leq 1$. Note that since the posterior can not have a larger support than the prior distribution, the ELBO optimizer must satisfy $\widehat\nu_i\in\{0,1\}$ and $\sum \widehat\nu_i =s$.

\section{VB Posterior Asymptotics} \label{sec:rate}
In this section, we establish the distributional convergence of the variational Bayes posterior $\widehat q(\theta)$, towards the true regression function $f_0$, under the squared Hellinger distance $d(\cdot,\cdot)$, which is
\[
d^2(P_\theta,P_0)=\mathbb{E}_X\left(1-\exp\left\{-\frac{[f_\theta(X)-f_0(X)]^2}{8\sigma^2_{\epsilon}}\right\}\right).
\]
Note that in section \ref{sec:gen}, the results under $L_2$ norm will be studied.

Denote the log-likelihood ratio between $p_0$ and $p_\theta$ as
$$
l_n(P_0,P_{\theta})=\log\frac{p_0(\boldsymbol{D}^{(n)})}{p_{\theta}(\boldsymbol{D}^{(n)})}=\sum^n_{i=1}\log \frac{p_0(D_i)}{p_{\theta}(D_i)},
$$
then the negative ELBO can be expressed as 
\[
-\Omega=\mbox{KL}(q(\theta)\|\pi(\theta))+\int l_n(P_0, P_{\theta})q(\theta)d\theta+C,
\]
where $C=-\log p_0(\boldsymbol{D}^{(n)})$ is a constant with respect to $q(\theta)$.

Our first lemma provides an upper bound for the negative ELBO for sparse DNN model under the prior specification (\ref{prior}) and variational family $\mathcal Q$. 
Let $\Theta_B(L,\boldsymbol{p},s)=\{\theta \in \Theta(L, \boldsymbol{p}, s):\|\theta\|_{\infty} \leq B\}$ for some constant $B>0$.
\begin{lem}{\label{thmbound2}} 
Given any network family $\mathcal F(L,\boldsymbol{p},s)$ with an equal width $\boldsymbol{p}=(12pN,\dots,12pN)$, we have that, with dominating probability for some $C' > 0$, 
\begin{align}
\inf_{q(\theta) \in \mathcal Q}\Bigl\{ \mbox{KL}(q(\theta)\|\pi(\theta))
+ \int l_n(P_0, P_{\theta})&q(\theta)d\theta \Bigr\}\nonumber \\
&\: \leq C'n (r_n
+ \xi_n ) {\label{eqbound2}}
\end{align}
holds, where $$r_n:=r_n(L, N, s)=\frac{Ls}{n}\log(12BpN) + \frac{s}{n}\log(nL/s),$$ and $$\xi_n := \xi_n(L, N, s)= \inf_{\theta \in \Theta_B(L, \boldsymbol{p}, s)}\|f_{\theta}-f_0\|^2_\infty,$$
where $B=B_0$ under uniform prior setting, and $B\geq2$ under normal prior setting. 
\end{lem}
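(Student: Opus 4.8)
The plan follows the standard recipe for an ELBO upper bound: since the left side of (\ref{eqbound2}) is an infimum over $\mathcal{Q}$, it suffices to exhibit one $q^\star\in\mathcal{Q}$ and control the two summands of the objective at $q^\star$. Assume $\xi_n<\infty$ (else there is nothing to prove) and let $\theta^\star\in\Theta_B(L,\boldsymbol{p},s)$ attain $\xi_n=\|f_{\theta^\star}-f_0\|_{\infty}^2$, so $\|\theta^\star\|_0\le s$ and $\|\theta^\star\|_{\infty}\le B$. Let $\gamma^\star$ be the support of $\theta^\star$, padded with arbitrary extra indices so that $\sum_i\gamma^\star_i=s$, and define $q^\star$ by $\nu_i=\gamma^\star_i$ together with, on each $i$ with $\gamma^\star_i=1$, a slab concentrated on an interval of length $\delta_n$ containing $\theta^\star_i$ and lying in $[-B_0,B_0]$ (uniform case), or $\mathcal{N}(\theta^\star_i,\sigma_n^2)$ (Gaussian case), for a bandwidth $\delta_n$ (resp.\ $\sigma_n$) chosen below. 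Since $\theta^\star$ depends on $f_0$ only, $q^\star$ is non-random.

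The single analytic ingredient is a perturbation (Lipschitz) bound for the network map: for two parameter vectors of architecture $(L,\boldsymbol{p})$ with entries bounded by $B$,
\[
\|f_{\theta}-f_{\theta'}\|_{\infty}\le\mathcal{L}_n\|\theta-\theta'\|_{\infty}\ \text{on}\ [-1,1]^p,\qquad \mathcal{L}_n\lesssim L(12BpN)^{L},
\]
obtained by telescoping over layers using $1$-Lipschitzness of ReLU and $\|X\|_{\infty}\le1$. Hence every $\theta$ drawn from $q^\star$ has $\|\theta-\theta^\star\|_{\infty}\le\delta_n$, so $\|f_{\theta}-f_{\theta^\star}\|_{\infty}\le\mathcal{L}_n\delta_n$, and therefore $\mathbb{E}_{q^\star}\big[(f_{\theta}(x)-f_{\theta^\star}(x))^2\big]\le(\mathcal{L}_n\delta_n)^2$ and $|\mathbb{E}_{q^\star}f_{\theta}(x)-f_{\theta^\star}(x)|\le\mathcal{L}_n\delta_n$ uniformly in $x$. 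The bandwidth is then chosen so that $\mathcal{L}_n\delta_n$ is negligible while $\mbox{KL}(q^\star\|\pi)=O(nr_n)$; the natural pick is $\delta_n=\sigma_n=s/(nL(12BpN)^L)$, because then $s\log(1/\delta_n)=s\log(nL/s)+Ls\log(12BpN)=nr_n$ exactly, and $\mathcal{L}_n\delta_n\lesssim s/n\le1$ (so $n(\mathcal{L}_n\delta_n)^2\lesssim s^2/n\lesssim nr_n$).

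With this $q^\star$ the two terms are bounded directly. For the KL, decompose $\mbox{KL}(q^\star\|\pi)$ into the structural part $\mbox{KL}(q^\star(\gamma)\|\pi(\gamma))=\log\binom{H}{s}\le s\log(eH/s)$, which is $O(nr_n)$ since $H\lesssim L(12pN)^2$, plus (given $\gamma^\star$) a sum of $s$ slab KLs --- each equal to $\log(2B_0/\delta_n)$ in the uniform case and at most $\log(\sigma_0/\sigma_n)+O(1)$ in the Gaussian case --- totalling $s\log(1/\delta_n)+O(s)=O(nr_n)$ by the choice of $\delta_n$. For the likelihood term, expanding the Gaussian log-likelihood ratio and using $Y_i=f_0(X_i)+\epsilon_i$,
\begin{align*}
\int l_n(P_0,P_{\theta})\,q^\star(\theta)\,d\theta
&=\frac{1}{2\sigma_{\epsilon}^2}\sum_{i=1}^n\mathbb{E}_{q^\star}\big[(f_0(X_i)-f_{\theta}(X_i))^2\big]\\
&\quad{}+\frac{1}{\sigma_{\epsilon}^2}\sum_{i=1}^n\epsilon_i\big(f_0(X_i)-\mathbb{E}_{q^\star}f_{\theta}(X_i)\big).
\end{align*}
The first sum is $\le\tfrac{n}{2\sigma_{\epsilon}^2}(\sqrt{\xi_n}+\mathcal{L}_n\delta_n)^2\le\tfrac1{\sigma_{\epsilon}^2}\big(n\xi_n+n(\mathcal{L}_n\delta_n)^2\big)=O(n\xi_n+nr_n)$. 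The second sum has mean zero and, conditionally on $(X_i)$, is Gaussian with variance $\le\sigma_{\epsilon}^{-2}\big(2n\xi_n+2n(\mathcal{L}_n\delta_n)^2\big)$, so a Gaussian tail bound at threshold $n\xi_n+nr_n$ shows it is bounded by $n\xi_n+nr_n$ on an event of probability $\ge1-2\exp(-c\,nr_n)$, which is a dominating-probability event because $nr_n\to\infty$. Summing the three contributions gives (\ref{eqbound2}) for a suitable $C'$; the uniform and Gaussian slabs differ only in the admissible value of $B$ and in $C'$.

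The step I expect to be most delicate is the choice of bandwidth and the verification that a single $\delta_n$ makes the slab KL $O(nr_n)$ \emph{and} the ``transfer error'' $\mathcal{L}_n\delta_n$ negligible in every parameter regime; this works precisely because $1/\delta_n$ enters the KL only logarithmically, which is exactly why $r_n$ carries the factors $L\log(12BpN)$ and $\log(nL/s)$ and why $\delta_n\asymp s/(nL(12BpN)^L)$ is the right pick. Everything else --- the explicit constant in $\mathcal{L}_n$, the cardinality bound $H\lesssim L(12pN)^2$, and the cross-term concentration (easy since $q^\star$ is data-free) --- is routine bookkeeping.
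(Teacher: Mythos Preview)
Your argument follows the same route as the paper's: pick $q^\star$ concentrated at the best sparse approximant $\theta^\star$, bound $\mbox{KL}(q^\star\|\pi)$ by the structural term $\log\binom{H}{s}$ plus $s$ slab KLs of size $\log(1/\delta_n)$, split $l_n(P_0,P_\theta)$ into a squared-error piece and a mean-zero Gaussian cross term, and absorb the latter by a tail bound. The paper delegates the two key estimates $\mbox{KL}(q^\star\|\pi)\le nr_n$ and $\int\|f_\theta-f_{\theta^\star}\|_\infty^2\,q^\star(\theta)\,d\theta\le r_n$ to Theorem~2 of Ch\'erief-Abdellatif (with explicit bandwidths $a_n\asymp s\,n^{-1}(12BpN)^{-2L}$ in the uniform case and $\sigma_n^2\asymp s\,n^{-1}(24BpN)^{-2L}$ in the Gaussian case), whereas you rederive them from the layerwise Lipschitz bound and the pick $\delta_n=s/(nL(12BpN)^L)$; since only $s\log(1/\delta_n)$ enters the KL, these choices produce the same $r_n$.

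There is one genuine gap, confined to the Gaussian slab. You assert that ``every $\theta$ drawn from $q^\star$ has $\|\theta-\theta^\star\|_\infty\le\delta_n$'', but with $q^\star(\theta_i\mid\gamma_i^\star=1)=\mathcal N(\theta_i^\star,\sigma_n^2)$ the draws are unbounded. This matters twice: the almost-sure perturbation bound fails outright, and more subtly the constant $\mathcal L_n\lesssim L(12BpN)^L$ you quote is only valid for parameters with $\|\theta\|_\infty\le B$, so on the Gaussian tail the map $\theta\mapsto f_\theta$ can blow up like $\|\theta\|_\infty^{L}$ and $(\mathcal L_n\delta_n)^2$ no longer controls $\mathbb{E}_{q^\star}[(f_\theta(x)-f_{\theta^\star}(x))^2]$. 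The paper sidesteps this by choosing $\sigma_n$ an extra factor $(12BpN)^{-L}$ smaller and invoking the cited moment bound $\int\|f_\theta-f_{\theta^\star}\|_\infty^2\,q^\star(\theta)\,d\theta\le r_n$ directly. Your proof is complete as written for the uniform slab; for the Gaussian slab you need either that second-moment bound or an explicit high-probability truncation to $\{\|\theta\|_\infty\le 2B\}$ with a separate (easy) treatment of the complement. With that repair the remainder of your argument --- including the cross-term concentration and the bookkeeping $s^2/n\lesssim nr_n$ --- goes through unchanged.
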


The upper bound (\ref{eqbound2}) consists of two terms: the first term $r_n$ is the variational error caused by the variational Bayes approximation; the second term $\xi_n$ is the approximation error of approximating $f_0$ by sparse ReLU DNN whose weight and bias parameters are bounded by $B$. Note that since $B$ is a pre-specific constant, its value doesn't affect the rate of $r_n$

Our next lemma links the contraction rate of variational posterior with the negative ELBO discussed in Lemma \ref{thmbound2}.
\begin{lem}{\label{lmbound1}}
Given network family $\mathcal F(L,\boldsymbol{p},s)$ with equal width $\boldsymbol{p}=(12pN,\dots,12pN)$, if
$\max\{s\log(nL/s), Ls\log(pN)\} = o(n)$, then with probability at least $(1-e^{-Cn\varepsilon^2_n})$ for some $C>0$, we have
\begin{align}
\int d^2(P_{\theta}, P_0)\widehat{q}(\theta)d\theta \leq C\varepsilon^2_n + &\frac{3}{n}\inf_{q(\theta) \in \mathcal Q}\Bigl\{ \mbox{KL}(q(\theta)\|\pi(\theta))\nonumber\\
&\quad + \int l_n(P_0, P_{\theta})q(\theta)d\theta \Bigr\}, \label{eqbound1}
\end{align}
where $$\varepsilon_n:=\varepsilon_n(L,N,s)
=M\sqrt{\frac{s\log(nL/s) + Ls\log(pN)}{n}}\log^\delta(n)$$ for any $\delta\geq1$ and some large constant $M$.
\end{lem}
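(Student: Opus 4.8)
\emph{Proof sketch.} The plan is to bridge the two sides of (\ref{eqbound1}) by combining a classical testing argument with the Donsker--Varadhan change-of-measure inequality, using ELBO-optimality of $\widehat q$ as the link. Since, by the ELBO identity above, $\widehat q$ minimizes $\mbox{KL}(q(\theta)\|\pi(\theta))+\int l_n(P_0,P_\theta)q(\theta)d\theta$ over $q\in\mathcal Q$, one has
\begin{equation*}
\mbox{KL}(\widehat q\|\pi)+\int l_n(P_0,P_\theta)\widehat q(\theta)d\theta \;\leq\; \inf_{q(\theta)\in\mathcal Q}\Bigl\{\mbox{KL}(q\|\pi)+\int l_n(P_0,P_\theta)q(\theta)d\theta\Bigr\},
\end{equation*}
so it suffices to dominate $\int d^2(P_\theta,P_0)\widehat q(\theta)d\theta$, up to an additive $C\varepsilon_n^2$, by a constant times the left-hand side. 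The infimum term then plays exactly the role of the classical prior-mass condition, so no separate prior-mass bound is needed at this stage.

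First I would construct an exponentially powerful test. Take the sieve of bounded sparse networks $\Theta_{B_n}(L,\boldsymbol p,s)$, with $B_n=B_0$ under the uniform slab (so $\pi$ and every $q\in\mathcal Q$ are supported on it) and $B_n\asymp\sqrt n$ under the Gaussian slab, chosen large enough that $\pi(\|\theta\|_\infty>B_n)\leq e^{-n}$. Using the metric-entropy bound for sparse ReLU networks, $\log N(\delta,\{f_\theta:\theta\in\Theta_{B_n}(L,\boldsymbol p,s)\},\|\cdot\|_\infty)\lesssim Ls\log(B_npN)+s\log(1/\delta)$, together with $d^2(P_\theta,P_0)\leq\|f_\theta-f_0\|^2_\infty/(8\sigma^2_\epsilon)$, a standard shelling construction gives a data-only test $\phi_n$ with $\mathbb E_0\phi_n\leq e^{-cn\varepsilon_n^2}$ and $\mathbb E_\theta(1-\phi_n)\leq e^{-cnd^2(P_\theta,P_0)}$ for every $\theta$ in the sieve with $d(P_\theta,P_0)>\varepsilon_n$, for some $c>0$. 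This is exactly where the hypothesis $\max\{s\log(nL/s),Ls\log(pN)\}=o(n)$, together with the surplus factor $\log^\delta(n)$ in $\varepsilon_n$, enters: it forces $n\varepsilon_n^2$ to dominate the above entropy uniformly across the shells.

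Next I would decompose, with $\mathcal A_n=\{\theta:d(P_\theta,P_0)>\varepsilon_n\}$,
\begin{equation*}
\int d^2(P_\theta,P_0)\widehat q(\theta)d\theta \;\leq\; \varepsilon_n^2 + \phi_n + \int_{\mathcal A_n} d^2(P_\theta,P_0)(1-\phi_n)\widehat q(\theta)d\theta,
\end{equation*}
and bound the last term via Donsker--Varadhan, $\int g(\theta)\widehat q(\theta)d\theta\leq\mbox{KL}(\widehat q\|\pi)+\log\int e^{g(\theta)}\pi(\theta)d\theta$, applied with $g(\theta)=\tfrac n3 d^2(P_\theta,P_0)\,I(\theta\in\mathcal A_n)(1-\phi_n)-l_n(P_0,P_\theta)$. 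After rearranging, what remains is to show $\log\int e^{g(\theta)}\pi(\theta)d\theta\lesssim n\varepsilon_n^2$ with probability at least $1-e^{-cn\varepsilon_n^2}$. Using $e^{a(1-\phi_n)}\leq 1+(1-\phi_n)e^a$ for $a\geq0$ and $e^{-l_n(P_0,P_\theta)}=p_\theta(\boldsymbol D^{(n)})/p_0(\boldsymbol D^{(n)})$, a change of measure turns the cross term into $e^{\frac n3 d^2(P_\theta,P_0)}\mathbb E_\theta(1-\phi_n)$, which the type-II bound caps by $1$ provided $c\geq1/3$; off the sieve the crude bound $\mathbb E_0 e^{g(\theta)}\leq e^{n/3}$ together with $\pi(\|\theta\|_\infty>B_n)\leq e^{-n}$ suffices. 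Hence $\mathbb E_0\int e^{g(\theta)}\pi(\theta)d\theta$ is bounded by an absolute constant, and Markov's inequality gives the claim. Combining this with the displayed decomposition, the ELBO-optimality inequality of the first paragraph, and $\phi_n\leq e^{-cn\varepsilon_n^2}\leq\varepsilon_n^2$ on the good event yields (\ref{eqbound1}); the constant $3$ out front is precisely the reciprocal of the tuning constant $1/3$ chosen in $g$.

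The main obstacle is the exponential-moment control in the last step: it hinges on tuning $g$ so that the blow-up factor $e^{\frac n3 d^2(P_\theta,P_0)}$ is compensated exactly by the type-II error $e^{-cnd^2(P_\theta,P_0)}$, which in turn requires the test to have the sharp, $\theta$-dependent power produced by the entropy bound --- and obtaining that entropy bound for genuinely \emph{deep} sparse ReLU networks, with the correct $Ls\log(pN)$ dependence rather than merely $\log$-of-$L$, is itself the technical heart of the testing step. A secondary nuisance is the unbounded support of the Gaussian-slab variational family, which is absorbed by letting the sieve radius $B_n$ grow like $\sqrt n$ and invoking the prior-tail estimate $\pi(\|\theta\|_\infty>B_n)\leq e^{-n}$.
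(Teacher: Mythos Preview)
Your proposal is correct and follows essentially the same route as the paper's proof (adapted from Pati, Bhattacharya and Yang, 2018): Donsker--Varadhan applied with the function $-l_n(P_0,P_\theta)+\tfrac{n}{3}d^2(P_\theta,P_0)$, an exponentially powerful test built from the $\|\cdot\|_\infty$-entropy of the sparse DNN class (the paper's auxiliary testing lemma delivers exactly the type-II constant $C_2>1/3$ you require), a Fubini/Markov bound on the resulting exponential integral, and ELBO-optimality of $\widehat q$ to pass to the infimum. The only differences are cosmetic --- the paper applies Donsker--Varadhan first and then splits $\int e^h\pi$ into $\{d\le\varepsilon_n\}$ and $\{d>\varepsilon_n\}$, further decomposing the latter by $\phi$ versus $1-\phi$, whereas you decompose $\int d^2\widehat q$ first and fold $(1-\phi_n)I_{\mathcal A_n}$ directly into $g$ --- and your explicit sieve $\Theta_{B_n}$ for the Gaussian-slab case is a welcome technical addition that the paper's covering-number step leaves implicit.
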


Note that Lemma \ref{lmbound1} holds regardless of the choice of prior specification $\pi(\theta)$ and variational family $\mathcal Q$.

The LHS of (\ref{eqbound1}) is the variational Bayes posterior mean of the squared Hellinger distance. On the RHS, the first term $\varepsilon_n$ represents the estimation error under Hellinger metric, such that it is possible to test the true distribution $P_0$ versus all alternatives $\{P_\theta: d(P_\theta, P_0)\geq \varepsilon_n, \theta\in\Theta(L,\boldsymbol{p},s)\}$ with
exponentially small error probability (refer to Lemma 1.2 in the supplementary material); the second term, as discussed above, is the negative ELBO (up to a constant), which has been elaborated in Lemma \ref{thmbound2}.

Combining the above two lemmas together, one can easily obtain the following theorem: 
\begin{thm}\label{thm:d2bound}
Given any network family $\mathcal F(L,\boldsymbol{p},s)$ with equal width $\boldsymbol{p}=(12pN,\dots,12pN)$, if the conditions of Lemmas \ref{thmbound2} and \ref{lmbound1} hold, then
\begin{equation}\label{d2bound}
\int d^2(P_{\theta}, P_0)\widehat{q}(\theta)d\theta \leq C\varepsilon^2_n + 3 C' r_n + 3 C'\xi_n.
\end{equation}
\end{thm}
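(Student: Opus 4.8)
The plan is to combine Lemma \ref{thmbound2} and Lemma \ref{lmbound1} directly, since Theorem \ref{thm:d2bound} is stated as an immediate consequence of the two. First I would note that the hypotheses of the theorem assume all conditions of both lemmas are in force; in particular the condition $\max\{s\log(nL/s), Ls\log(pN)\} = o(n)$ from Lemma \ref{lmbound1} holds, so the conclusion \eqref{eqbound1} is available on an event $\mathcal{E}_1$ of probability at least $1 - e^{-Cn\varepsilon_n^2}$, and the conclusion \eqref{eqbound2} of Lemma \ref{thmbound2} holds on an event $\mathcal{E}_2$ of dominating probability. Working on the intersection $\mathcal{E}_1 \cap \mathcal{E}_2$, which still has dominating probability, I would start from \eqref{eqbound1}:
\[
\int d^2(P_\theta, P_0)\widehat q(\theta)\,d\theta \leq C\varepsilon_n^2 + \frac{3}{n}\inf_{q(\theta)\in\mathcal Q}\Bigl\{\mbox{KL}(q(\theta)\|\pi(\theta)) + \int l_n(P_0, P_\theta)q(\theta)\,d\theta\Bigr\}.
\]

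Then I would bound the infimum term on the right-hand side using \eqref{eqbound2}, which states that this same infimum is at most $C' n(r_n + \xi_n)$ with dominating probability. Substituting, the factor of $n$ cancels against the $1/n$ prefactor, leaving
\[
\int d^2(P_\theta, P_0)\widehat q(\theta)\,d\theta \leq C\varepsilon_n^2 + 3C'(r_n + \xi_n) = C\varepsilon_n^2 + 3C' r_n + 3C'\xi_n,
\]
which is exactly \eqref{d2bound}. The only bookkeeping subtlety is ensuring consistency of the constants $C$, $C'$ and the definition of $\varepsilon_n$, $r_n$, $\xi_n$ across the two lemmas, and reconciling the two probability statements — "probability at least $1 - e^{-Cn\varepsilon_n^2}$" and "dominating probability" — into a single dominating-probability guarantee for the combined bound; this is routine since a finite intersection of dominating-probability events is again dominating-probability.

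There is essentially no hard step here: the theorem is a corollary obtained by chaining two inequalities, and the substantive work has already been carried out in establishing Lemmas \ref{thmbound2} and \ref{lmbound1}. If anything, the part requiring the most care is verifying that the equal-width assumption $\boldsymbol{p} = (12pN,\dots,12pN)$ and the sparsity/depth conditions appearing in both lemmas are compatible and are all subsumed by the theorem's hypothesis "the conditions of Lemmas \ref{thmbound2} and \ref{lmbound1} hold," so that both lemma conclusions may be invoked simultaneously without further assumptions. Once that is checked, the display \eqref{d2bound} follows by the one-line substitution described above.
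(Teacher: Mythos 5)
Your proposal is correct and matches the paper exactly: the paper presents Theorem \ref{thm:d2bound} with no separate proof, stating only that it follows by "combining the above two lemmas," which is precisely the substitution of the bound \eqref{eqbound2} into the infimum term of \eqref{eqbound1} that you carry out. The bookkeeping points you raise (intersecting the two high-probability events, cancelling the factor of $n$) are handled implicitly in the paper and your treatment of them is sound.
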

The three terms in the RHS of (\ref{d2bound}) correspond to estimation error, variational error and approximation error respectively. All the three terms depend on the complexity of network structure. Specifically,
$$
\varepsilon_n^2\sim r_n\sim\max\left(\frac{s\log(nL/s)}{n},\frac{Ls\log(pN)}{n}\right),
$$
up to only logarithmic difference. Thus both $\varepsilon_n^2$ and $r_n$ are nearly linearly dependent on the sparsity and depth of the network structure specification. On the other hand, the approximation error $\xi_n$ generally decreases as one increases the complexity of networks configuration (i.e., the values of $N$, $L$ and $s$). 
Therefore, it reveals a trade-off phenomenon on the choice of network structure. Note that such trade-off echoes with those observed in the literature of nonparametric statistics: as one increases the domain of parameter space (e.g., increases the number of basis functions in spline regression modeling), it usually leads to smaller bias but larger variance.

As mentioned in \cite{Cherief2019Convergence}, we would like to bring out the concept of the bias-variance trade-off in the variational inference, where we name the third and second term in RHS of (\ref{eqbound2}) by bias and variance respectively. The variance component is controlled by $r_n$ with an order that is always linearly dependent on the sparsity level of the DNN, which is consistent with our perception. However, its linear dependence on the depth $L$ versus the logarithmic dependence on the width $N$ conflicts with the result that a deeper neural net generalizes better than a shallower one as often empirically observed. In the meantime, a deeper neural net could yield a smaller approximation error with fixed neurons \cite{Rolnick2018power}, which would then compensate for the increased variance caused by a deeper neural net. This reveals an interesting bias-variance trade-off phenomenon.

\section{Adaptive Architecture Search} \label{sec:adapt}
In Section \ref{sec:rate}, we establish the distributional convergence of VB posterior (\ref{d2bound}) under the Hellinger metric,
with a pre-specified DNN architecture, say depth $L$, width $N$ and sparsity $s$. 
Ideally, one would like to choose the network structure that minimizes the RHS of (\ref{d2bound}), thus leading to a better convergence guarantee. However, this best choice is generally not available due to the fact that the approximation error $\xi_n$ critically depends on the nature (e.g., continuity and smoothness) of the unknown $f_0$. Therefore, in this section, we will develop an adaptive variational Bayes inference procedure, under which the variational posterior contraction achieves the same convergence rate as if the optimal choice of network structure was given.

To simplify our analysis, we assume that the network depth $L$ is already well specified, and are only concerned about the adaptivity with respect to the network width and sparsity. Note that for a certain family of $f_0$, e.g., $f_0$ is H{\"o}lder smooth, the optimal choice of $L$ can indeed be specified without additional knowledge of $f_0$ (refer to Section \ref{sec:app} for detail).
To be more specific, we define $$(N^*,s^*)=\arg\min_{N, s} \{r_n(s,L,N) + \xi_n(s, L, N)\},$$ and consider
$12pN^*$ and $s^*$ to be the optimal network structure configuration for width and sparsity respectively. Such a choice strikes an optimal balance between variational error and approximation error. It is worth mentioning that the estimation error term $\varepsilon_n^2$ is of the same order as $r_n$ (up to a logarithmic term). Therefore, the optimal choice $(s^*,N^*)$ does minimize the RHS of (\ref{d2bound}) (up to a logarithmic term).
We further define $$\varepsilon_n^*
=M'\sqrt{\frac{Ls^*\log N^*+s^*\log(Ln/s^*)}{n}}\log^\delta(n)
$$ for some constant $M'$, $r_n^*=r_n(L,N^*,s^*)$ and $\xi_n^*=\xi_n(L,N^*,s^*)$. They represent the estimation error, variational error and approximation error respectively, under optimal choices $N^*$ and $s^*$.

In addition, the following  conditions are imposed on the optimal values $N^*$ and $s^*$:
\begin{cond} \label{cond1}
$1\prec \max\{Ls^* \log(pN^*), s^*\log(nL/s^*)\} = o(n^\alpha)$ for some $\alpha<1$.
\end{cond}
\begin{cond} \label{cond2}
$r_n^* \asymp \xi_n^*$.
\end{cond}
\begin{cond} \label{cond3}
$s^*\geq 12p N^*+L$.
\end{cond}

Condition \ref{cond1} assumes that the optimal network structure, in the asymptotic sense, is a sparse one. This is reasonable as it essentially requires that the data can be well approximated by a sparse DNN model. If this condition fails, there will be no basis for conducting sparse DNN modeling. 
Condition \ref{cond2} implies that the choice $(N^*,s^*)$, which minimizes $r_n+\xi_n$, also strikes the balance between $r_n$ and $\xi_n$.
Condition \ref{cond3} avoids the redundancy of network width. If this condition is violated, then there must be redundant node (i.e., node without connection) in every hidden layers. In such a situation, all these redundant nodes shall be removed from the network configuration, leading to a narrower network.
 
In the Bayesian paradigm, the adaptivity can be achieved by impose a reasonable prior on $(N,s)$. In other words, we expand the prior support to $$\mathcal{F}=\bigcup_{N=1}^\infty\bigcup_{s=0}^{H_{N}}\mathcal{F}(L,\boldsymbol{p}_N^{L},s),$$ where $\boldsymbol{p}_N^{L}=(12pN,\dots,12pN)\in\mathbb{R}^{L}$ and $H_{N}$ is the total possible number of edges in the ($L$-1)-hidden-layer network with layer width $12pN$. The prior specification on the network structure is similar to  \cite{Polson2018posterior}, that is 
\begin{equation}\label{prior2}
\begin{split}
    &\pi(N)=\frac{\lambda^N}{(e^{\lambda}-1)N!}\quad \mbox{ for }N\geq1,\\
    &\pi(s)\propto e^{-\lambda_ss}\quad \mbox{ for }s\geq0,
\end{split}
\end{equation}
where $\lambda_s$ satisfies $n\varepsilon_n^{*2}/s^*\succ \lambda_s \geq aL\log n$ for some $a>0$.

To implement variational inference, we consider the variational family $\mathcal{Q}_{N,s}$ that restricts the VB marginal posterior of $N$ and $s$ to be a degenerate measure: every distribution $q(\theta, N, s)$ in $\mathcal{Q}_{N,s}$ follows
\begin{equation}\label{qfamily}
\begin{split}
    &q(N)=\delta_{\widebar N},\quad q(s)=\delta_{\widebar s}, \quad q(\gamma_i|N, s) = \mbox{Bern}(\nu_i),\\
    & q(\theta_i|\gamma_i) = \gamma_i\mathcal{M}(\theta_i)+(1-\gamma_i)\delta_{0},
\end{split}
\end{equation}
for some $\widebar N\in\mathbb{Z^+}$ and $\widebar s\in\mathbb{Z}^{\geq 0}$.
This choice of variational family means that the VB posterior will adaptively select one particular network structure $(\widehat N,\hat s)$ by minimizing
\[
\widehat q(\theta, N, s)=\underset{q(\theta, N, s)\in\mathcal Q_{N,s}}{\arg\max} 
\mbox{KL}(q(\theta, N, s)\|\pi(\theta,N,s|\boldsymbol{D}^{(n)})).
\]
Note that 
$
\mbox{KL}(q(\theta, N, s)\|\pi(\theta,N,s|\boldsymbol{D}^{(n)}))
=-\log\pi(\widebar N,\widebar s)+\mbox{KL}(q(\theta|\widebar N, \widebar s)\|p(\theta,\boldsymbol{D}^{(n)}|\widebar N,\widebar s))+C,
$
for some constant $C$. Let $$\Omega(\widebar N, \widebar s)=\max_{q(\theta|\widebar N, \widebar s)}[-\mbox{KL}(q(\theta| \widebar N, \widebar s)\|p(\theta,\boldsymbol{D}^{(n)}|\widebar N, \widebar s))]$$ 
be the maximized ELBO given the network structure determined by parameters $\widebar N$ and $\widebar s$.
Then 
\begin{equation} \label{eq:pelbo}
(\widehat N,\widehat s)=\arg\max_{\widebar N,\widebar s}[ \Omega(\widebar N, \widebar s)+\log\pi(\widebar N,\widebar s)].
\end{equation}
In other words, the above VB modeling leads to a variational network structure selection based on a penalized ELBO criterion, where the penalty term is the logarithm of the prior of $\widebar N$ and $\widebar s$.

In Bayesian analysis, model selection relies on the (log-)posterior: 
$\log\pi(D|\widebar N, \widebar s)+\log \pi(\widebar N,\widebar s)$. Thus, the proposed variational structure selection procedure is an approximation to maximum a posteriori (MAP) estimator, by replacing the model evidence term $\log\pi(D|\widebar N,\widebar s)$ with the ELBO $\Omega(\widebar N,\widebar s)$.


Our next theorem shows that the proposed variational modeling attains the best rate of convergence without the knowledge of optimal network architecture $N^*$ and $s^*$.
\begin{thm}\label{adapt}
Under the adaptive variational Bayes modeling described above, we achieve that
\begin{equation}\label{d2bound+}
\int d^2(P_{\theta}, P_0)\widehat{q}(\theta)d\theta \leq C''[\varepsilon_n^{*2} +r_n^* +\xi_n^*] 
\end{equation}
holds with dominating probability for some constant $C''>0$.
\end{thm}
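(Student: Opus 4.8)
The plan is to mirror the two-step proof of Theorem~\ref{thm:d2bound} (first bound the penalized negative ELBO, then convert it into a Hellinger rate via a testing argument), but now carrying the model-selection penalty $-\log\pi(\widebar N,\widebar s)$ through the whole argument. Up to the additive constant $\log p_0(\boldsymbol{D}^{(n)})$ that does not involve the optimization variables, one has the identity
\[
-\Omega(\widebar N,\widebar s)-\log\pi(\widebar N,\widebar s)+\log p_0(\boldsymbol{D}^{(n)})
= -\log\pi(\widebar N,\widebar s)+\inf_{q}\Bigl\{\mbox{KL}(q(\theta)\|\pi(\theta|\widebar N,\widebar s))+\int l_n(P_0,P_\theta)q(\theta)\,d\theta\Bigr\},
\]
so by (\ref{eq:pelbo}) the selected $(\widehat N,\widehat s)$ minimizes the right-hand side over all structures. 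First I would evaluate this quantity at the \emph{oracle} choice $(N^*,s^*)$: Lemma~\ref{thmbound2} applied to $\mathcal F(L,\boldsymbol p_{N^*}^L,s^*)$ bounds the infimum over $q$ by $C'n(r_n^*+\xi_n^*)$ with dominating probability, while the prior (\ref{prior2}) gives $-\log\pi(N^*)\lesssim N^*\log N^*$ (Stirling) and $-\log\pi(s^*)\lesssim\lambda_s s^*+O(1)$. Using Condition~\ref{cond3} to get $N^*\lesssim s^*$, Condition~\ref{cond1} to ensure $s^*\log s^*\lesssim s^*\log(nL/s^*)\le nr_n^*$, and the calibration $\lambda_s\prec n\varepsilon_n^{*2}/s^*$ to get $\lambda_s s^*\prec n\varepsilon_n^{*2}$, this yields $-\log\pi(N^*,s^*)\lesssim n(r_n^*+\varepsilon_n^{*2})$. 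By optimality of $(\widehat N,\widehat s)$ it follows that, with dominating probability,
\[
-\Omega(\widehat N,\widehat s)-\log\pi(\widehat N,\widehat s)+\log p_0(\boldsymbol{D}^{(n)})\;\le\; Cn\bigl(\varepsilon_n^{*2}+r_n^*+\xi_n^*\bigr).
\]

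Second, I would establish an adaptive analogue of Lemma~\ref{lmbound1} over the enlarged support $\mathcal F=\bigcup_{N,s}\mathcal F(L,\boldsymbol p_N^L,s)$. Fix cutoffs $\widebar N_n$ (roughly of order $n\varepsilon_n^{*2}/\log n$) and $\widebar s_n\asymp n\varepsilon_n^{*2}/\lambda_s$, chosen so that the tails of $\pi(N)$ and $\pi(s)$ beyond these cutoffs carry mass at most $e^{-cn\varepsilon_n^{*2}}$, while the sieve $\mathcal F_n=\bigcup_{N\le\widebar N_n}\bigcup_{s\le\widebar s_n}\Theta_B(L,\boldsymbol p_N^L,s)$ still has metric entropy $\lesssim n\varepsilon_n^{*2}$ at scale $\varepsilon_n^*$; this balance is exactly where the super-exponential decay of the Poisson prior on $N$ and the calibration $\lambda_s\ge aL\log n$ are used. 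Feeding an entropy-based test $\phi_n$ for $\mathcal F_n$ at scale $\varepsilon_n^*$ (as in the testing construction behind Lemma~\ref{lmbound1}) into the same argument proving Lemma~\ref{lmbound1} gives, with probability at least $1-e^{-Cn\varepsilon_n^{*2}}$ and provided $(\widehat N,\widehat s)$ lies in the sieve (i.e. $\widehat N\le\widebar N_n$, $\widehat s\le\widebar s_n$),
\[
\int d^2(P_\theta,P_0)\widehat q(\theta)\,d\theta\;\le\; C\varepsilon_n^{*2}+\frac{C}{n}\Bigl[-\Omega(\widehat N,\widehat s)-\log\pi(\widehat N,\widehat s)+\log p_0(\boldsymbol{D}^{(n)})\Bigr].
\]
Combining this with the display from the previous paragraph yields (\ref{d2bound+}), modulo the ``provided'' clause.

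Discharging that clause is the step I expect to be the main obstacle: since $\Omega(\widebar N,\widebar s)$ is only a \emph{lower} bound on the log model evidence $\log p(\boldsymbol{D}^{(n)}|\widebar N,\widebar s)$, one cannot argue directly that oversized structures have small ELBO. Instead I would use that $\int (p_\theta/p_0)(\boldsymbol{D}^{(n)})\,\pi(\theta|\widebar N,\widebar s)\,d\theta$ has mean one, so by Markov's inequality together with a union bound weighted by $\pi(\widebar N,\widebar s)$ (which sums to one), with probability at least $1-e^{-t}$ every structure satisfies $\Omega(\widebar N,\widebar s)+\log\pi(\widebar N,\widebar s)\le\log p_0(\boldsymbol{D}^{(n)})+t$. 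Taking $t\asymp n\varepsilon_n^{*2}$ and comparing against the lower bound $\Omega(N^*,s^*)+\log\pi(N^*,s^*)\ge\log p_0(\boldsymbol{D}^{(n)})-Cn(\varepsilon_n^{*2}+r_n^*+\xi_n^*)$ obtained from the first paragraph, any maximizer of (\ref{eq:pelbo}) must obey $-\log\pi(\widehat N,\widehat s)\lesssim n(\varepsilon_n^{*2}+r_n^*+\xi_n^*)$; since $-\log\pi(N)\gtrsim N\log N$ and $-\log\pi(s)\gtrsim\lambda_s s$, this forces $\widehat N$ and $\widehat s$ below the chosen cutoffs. A parallel and by now routine argument — trivial under the uniform slab and using a Gaussian-tail bound under the normal slab, exactly as in the fixed-structure analysis — shows the variational slab mass outside $\Theta_B$ is negligible, so restricting the sieve to $\|\theta\|_\infty\le B$ costs nothing. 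Intersecting the good events (Lemma~\ref{thmbound2}, the testing event, the Markov/union-bound event, and Conditions~\ref{cond1}--\ref{cond3}) then completes the proof.
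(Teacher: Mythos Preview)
Your first two steps are essentially what the paper does. The paper packages the adaptive analogue of Lemma~\ref{lmbound1} and the oracle ELBO bound together into a single auxiliary lemma showing that, once one restricts to truncated priors on $N$ and $s$ supported on $\{1,\dots,N_n\}\times\{0,\dots,s_n\}$, both (\ref{eqbound1}) and (\ref{eqbound2}) continue to hold at the rates $\varepsilon_n^{*}$, $r_n^{*}$, $\xi_n^{*}$; your bounds on $-\log\pi(N^*,s^*)$ match the paper's. Since $(\widehat N,\widehat s)$ lying in the sieve makes the full and truncated variational problems coincide, the logic of your second paragraph is correct.

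The genuine gap is in your third step. Your $\pi$-weighted Markov/union bound yields the \emph{uniform} statement
\[
\Omega(\widebar N,\widebar s)+\log\pi(\widebar N,\widebar s)\le\log p_0(\boldsymbol{D}^{(n)})+t\qquad\text{for every }(\widebar N,\widebar s),
\]
while optimality plus the first paragraph gives
\[
\Omega(\widehat N,\widehat s)+\log\pi(\widehat N,\widehat s)\ge\log p_0(\boldsymbol{D}^{(n)})-Cn(\varepsilon_n^{*2}+r_n^*+\xi_n^*).
\]
These two inequalities only sandwich the \emph{sum} $\Omega+\log\pi$ at the maximizer; they say nothing about $\log\pi(\widehat N,\widehat s)$ by itself. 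Substituting the first into the second collapses to $0\le t+Cn(\varepsilon_n^{*2}+r_n^*+\xi_n^*)$, which is vacuous, so the claimed conclusion $-\log\pi(\widehat N,\widehat s)\lesssim n(\varepsilon_n^{*2}+r_n^*+\xi_n^*)$ does not follow. Put differently, an oversized structure could in principle have a large enough conditional ELBO to exactly compensate its large prior penalty, and your bound does not rule this out.

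The paper closes this gap differently: it first shows that the \emph{true posterior} mass of the oversized region $\{N\ge N_n\}\cup\{s\ge s_n\}$ is at most $e^{-c_0n\varepsilon_n^{*2}}$, where $c_0$ can be made as large as desired by enlarging the cutoff constant $c_s$. This combines the prior tail bound $\pi(\text{oversized})\le e^{-c_1n\varepsilon_n^{*2}}$ with a marginal-likelihood lower bound $m(\boldsymbol{D}^{(n)})/p_0(\boldsymbol{D}^{(n)})\ge e^{-c_2n\varepsilon_n^{*2}}$ via the standard Bayes-factor argument. Then for any $q\in\mathcal Q_{N,s}$ supported on the oversized region one has $\mbox{KL}(q\|\pi(\cdot|\boldsymbol{D}^{(n)}))\ge -\log\pi(\text{oversized}\mid\boldsymbol{D}^{(n)})\ge c_0n\varepsilon_n^{*2}$, while $\mbox{KL}(q^*\|\pi(\cdot|\boldsymbol{D}^{(n)}))\le Cn\varepsilon_n^{*2}$ for the oracle $q^*$; taking $c_0>C$ forces the KL-minimizer into the sieve. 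Your evidence-based route can be repaired, but you must use a \emph{lighter} weighting than $\pi$ in the union bound (for instance $w(N,s)\propto N^{-2}(s+1)^{-2}$), so that the resulting per-structure bound reads $\Omega(\widebar N,\widebar s)\le\log p_0+t+O(\log\widebar N+\log\widebar s)$; then optimality gives $-\log\pi(\widehat N,\widehat s)\le t+O(\log\widehat N+\log\widehat s)+Cn\varepsilon_n^{*2}$, and since $-\log\pi(N,s)\gtrsim N\log N+\lambda_s s$ grows strictly faster than logarithmically, the cutoffs follow.
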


It is worth mentioning that the above result doesn't imply the adaptive variational procedure exactly finds the optimal choice such that $\widehat N\approx N^*$ and $\widehat s\approx s^*$. The proof of Theorem \ref{adapt} only shows that the adaptive VB procedure avoids over-complicated network structures, such that $\widehat N$ and $\widehat s$ will not be overwhelmingly larger than the $N^*$ and $s^*$ respectively. Note that $(N^*,s^*)$ is the universal optimal choice, in the sense that it ensures that for any data set generated from the underlying model (\ref{eqmod1}), the corresponding variational inference is the best. Note that $(\widehat N, \widehat s)$ is a data-dependent choice, which differs from data to data and may be quite different from $(N^*,s^*)$.


\section{Applications} \label{sec:app}

In this section, we will apply the general theoretical results to two important types of ground truth: 1) $f_0$ is some unknown H{\"o}lder smooth function and 2) $f_0$ exactly corresponds to an unknown sparse DNN model, i.e., the teacher-student framework \cite{tian2018theoretical, Goldt2019Dynamics}.

\subsection{H{\"o}lder smooth function} \label{sec:knownalpha}
we assume the unknown $f_0$ belongs to the class of $\alpha$-H{\"o}lder smooth functions $\mathcal{H}^{\alpha}_p$, defined as 
\begin{equation*}
\begin{split}
\mathcal{H}^{\alpha}_p = \Bigl\{&f: \|f\|^\alpha_{\mathcal{H}}:= \sum_{\kappa:|\kappa|<\alpha}\|\partial^{\kappa}f\|_{\infty} \\
&+ \sum_{\kappa:|\kappa|=\lfloor \alpha \rfloor}\sup_{\substack{x,y\in[-1,1]^p\\ x \neq y}} \frac{|\partial^{\kappa}f(x)-\partial^{\kappa}f(y)|}{|x-y|_{\infty}^{\alpha - \lfloor \alpha \rfloor}}\leq \infty \Bigr\}.
\end{split}
\end{equation*}

To quantify the approximation error $\xi_n$, certain knowledge of approximation theory is required. 
There is rich literature on the approximation properties of neural networks. For instance, \cite{cheang2000better} and \cite{cheang2010approximation} provided tight approximation error bound for simple indicator functions; \cite{ismailov2017approximation} studied approximation efficiency of shallow neural network. Some recent works characterize the approximation accuracy of sparsely connected deep nets \cite{Bolcskei2019, Schmidt-Hieber2017Nonparametric, Bauler2019On} as well.

The following lemma is due to \cite[Theorem  3]{Schmidt-Hieber2017Nonparametric}.
\begin{lem}\label{lmsch1}
Assume $f_0 \in \mathcal{H}^{\alpha}_p$ for some $\alpha > 0$, then there exists a neural net $\widehat{f} \in \mathcal{F}(L,\boldsymbol{p}, s)$ with $\boldsymbol{p}=(12pN, \ldots, 12pN)\in\mathbb{R}^{L}$ whose bias and weight parameters are bounded by 1, and
\begin{equation}\label{choices}
\begin{split}
& L = 8 + (\lfloor \log_2n \rfloor+5)(1+\lceil \log_2 p \rceil),\\
& s \leq 94p^2(\alpha+1)^{2p}N(L+\lceil \log_2 p\rceil), \\
& N =C_N\lfloor n^{p/(2\alpha+p)}/\log(n)\rfloor,
\end{split}
\end{equation}
for some positive constant $C_N$, such that 
\begin{equation}{\label{eqsch1}}
\begin{split}
    \|\widehat{f}-f_0\|_{\infty}
    \leq (2\|f_0\|^{\alpha}_{\mathcal{H}}+1)3^{p+1}\frac{N}{n}+\|f_0\|^{\alpha}_{\mathcal{H}}2^{\alpha}(N)^{-\alpha/p}.
\end{split}
\end{equation}
\end{lem}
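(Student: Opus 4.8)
The plan is to deduce this as a corollary of Schmidt-Hieber's ReLU-network approximation theorem \cite[Theorem~3]{Schmidt-Hieber2017Nonparametric}, which already produces approximants of H{\"o}lder functions with exactly the depth/width/sparsity scaling asserted here; what remains is a change of domain and the matching of constants. Recall that that theorem shows: for a function on $[0,1]^p$ in an $\alpha$-H{\"o}lder ball of radius $K$, any integer $m\ge1$, and $N$ above a mild threshold, there is a ReLU network of depth $8+(m+5)(1+\lceil\log_2 p\rceil)$, with hidden widths at most a fixed multiple of $(p+\lceil\alpha\rceil)N$, at most $c_{p,\alpha}N(m+\lceil\log_2 p\rceil)$ nonzero parameters, all weights and biases in $[-1,1]$, and sup-norm error at most $(2K+1)c'_p N2^{-m}+K\,c''_\alpha N^{-\alpha/p}$, for explicit constants $c_{p,\alpha},c'_p,c''_\alpha$ depending only on $p$ and $\alpha$.

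\emph{Step 1 (reduction to the unit cube).} Since $f_0$ lives on $[-1,1]^p$, put $g(z):=f_0(2z-(1,\dots,1))$ for $z\in[0,1]^p$; by the chain rule $\|g\|^\alpha_{\mathcal H}\le 2^\alpha\|f_0\|^\alpha_{\mathcal H}=:K$. A net $\tilde g$ approximating $g$ on $[0,1]^p$ yields $\widehat{f}(x):=\tilde g((x+(1,\dots,1))/2)$ approximating $f_0$ on $[-1,1]^p$ with the same sup-norm error; the coordinatewise affine map $x_i\mapsto(x_i+1)/2=\sigma(x_i/2+1/2)$ is either folded into $\tilde g$'s first layer or realized by one extra width-$p$ ReLU layer, in both cases keeping all parameters in $[-1,1]$ and changing the additive constant in the depth by at most one. (Alternatively one re-runs Schmidt-Hieber's construction --- partitions of unity, ``hat'' functions, localized Taylor polynomials --- verbatim on $[-1,1]^p$, with only cosmetic changes to constants.)

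\emph{Step 2 (parameter choices; reading off (\ref{choices})--(\ref{eqsch1})).} Take $m=\lfloor\log_2 n\rfloor$, so $2^{-m}\le 2/n$ and the depth is exactly the $L$ in (\ref{choices}); the first error term becomes a multiple of $N/n$ and the second a multiple of $N^{-\alpha/p}$, and substituting $K=2^\alpha\|f_0\|^\alpha_{\mathcal H}$ and bounding the numerical constants produces (\ref{eqsch1}). Embed the hidden width (at most a multiple of $(p+\lceil\alpha\rceil)N$) into the equal width $12pN$ by adjoining neurons with all incoming and outgoing weights $0$: this is feasible once $C_N$ (hence $N$) is large in terms of $p,\alpha$, it creates no nonzero parameter and does not enlarge $\|\theta\|_\infty$, so the padded net lies in $\mathcal F(L,\boldsymbol p,s)$ with $\boldsymbol p=(12pN,\dots,12pN)$ and parameters bounded by $1$. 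Since $m\le L$, we have $c_{p,\alpha}N(m+\lceil\log_2 p\rceil)\le c_{p,\alpha}N(L+\lceil\log_2 p\rceil)$, so the sparsity bound in (\ref{choices}) holds once $94p^2(\alpha+1)^{2p}$ is taken to dominate $c_{p,\alpha}$; then the concrete value $N=C_N\lfloor n^{p/(2\alpha+p)}/\log n\rfloor$ is simply substituted, while the error is kept displayed in terms of $N$.

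\emph{Main obstacle.} There is no new approximation-theoretic content here; the delicate part is purely bookkeeping. One must verify that (i) the rescaling of Step~1 is incorporated without violating the ``parameters in $[-1,1]$'' constraint or perturbing the exact depth formula in (\ref{choices}); (ii) the width-padding and all constant identifications are mutually consistent --- i.e.\ that Schmidt-Hieber's dimension- and smoothness-dependent constants, after the $2^\alpha$-inflation of the H{\"o}lder norm, are dominated by the explicit expressions $12$, $94p^2(\alpha+1)^{2p}$, $(2\|f_0\|^\alpha_{\mathcal H}+1)3^{p+1}$ and $\|f_0\|^\alpha_{\mathcal H}2^\alpha$ appearing in the statement; and (iii) Schmidt-Hieber's threshold condition on $N$ is met by $N=C_N\lfloor n^{p/(2\alpha+p)}/\log n\rfloor$ for $n$ large, enlarging $C_N$ if necessary.
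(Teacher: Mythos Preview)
Your proposal is correct and matches the paper's approach exactly: the paper gives no proof at all and simply attributes the lemma to \cite[Theorem~3]{Schmidt-Hieber2017Nonparametric}, so your deduction from that theorem (with the domain rescaling and constant-matching bookkeeping you outline) is precisely what is needed and in fact supplies more detail than the paper does.
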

Lemma $\ref{lmsch1}$ summarizes the expressibility of sparse ReLU DNN in terms of its depth, width and sparsity.
It trivially implies that if $L,N,s$ satisfy (\ref{choices}) and $p=O(1)$, then $\max(\xi_n,r_n,\epsilon_n^2)= O(n^{2\alpha/(2\alpha+p)}\log^{\delta} n)$ for some $\delta>1$. Therefore, Theorem \ref{thm:d2bound} implies the following corollary.
\begin{cor}\label{corminimax}
Assume $f_0 \in \mathcal{H}^{\alpha}_p$ for some known $\alpha > 0$, where $p = O(1)$. Choose $L$, $s$ and $N$ as in (\ref{choices}). Then, our variational modeling satisfies that
\begin{equation}\label{postcon}
\begin{split}
&\int d^2(P_{\theta}, P_0)\widehat{q}(\theta)d\theta\leq C''[n^{-\alpha/(2\alpha+p)}\log^{\delta}(n)]^2,
\end{split}
\end{equation}
with dominating probability, for some $\delta>1$ and some constant $C''>0$.
\end{cor}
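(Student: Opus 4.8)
The plan is to obtain Corollary~\ref{corminimax} directly from Theorem~\ref{thm:d2bound}, using Lemma~\ref{lmsch1} to control the approximation error; the only substantive work is tracking the polynomial and logarithmic orders of $L$, $N$, $s$ under the prescribed choice~(\ref{choices}). First I would record, using $p=O(1)$ (so that $(\alpha+1)^{2p}$, $3^{p+1}$ and $\lceil\log_2 p\rceil$ are $O(1)$ constants), that (\ref{choices}) gives $L\asymp\log n$, $N\asymp n^{p/(2\alpha+p)}/\log n$, and $s\asymp NL\asymp n^{p/(2\alpha+p)}$, where $\asymp$ hides constants depending only on $\alpha$ and $p$. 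Then I would check the single hypothesis needed to invoke Lemmas~\ref{thmbound2} and~\ref{lmbound1}: $\max\{s\log(nL/s),\,Ls\log(pN)\}$ is of order $n^{p/(2\alpha+p)}\log^2 n$, which is $o(n)$ since $p/(2\alpha+p)<1$. Hence Theorem~\ref{thm:d2bound} applies and delivers $\int d^2(P_\theta,P_0)\widehat q(\theta)\,d\theta\le C\varepsilon_n^2+3C'r_n+3C'\xi_n$, so it remains to bound each of the three terms by $n^{-2\alpha/(2\alpha+p)}$ times a fixed power of $\log n$.

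For the variational and estimation errors this is a plug-in: substituting the orders above into $r_n=\tfrac{Ls}{n}\log(12BpN)+\tfrac{s}{n}\log(nL/s)$ gives $r_n=O(n^{-2\alpha/(2\alpha+p)}\log^2 n)$ (the fixed constant $B$ being irrelevant to the rate), and since $\varepsilon_n^2$ is the same ratio multiplied by an extra $\log^{2\delta}n$ factor, $\varepsilon_n^2=O(n^{-2\alpha/(2\alpha+p)}\log^{2\delta+2}n)$. For the approximation error, I would first note a compatibility point: the network $\widehat f$ supplied by Lemma~\ref{lmsch1} has all weight and bias parameters bounded by $1$, and $1\le B$ in both prior regimes ($B=B_0>1$ for the uniform slab, $B\ge2$ for the Gaussian slab), so $\widehat f\in\Theta_B(L,\boldsymbol{p},s)$ and therefore $\xi_n\le\|\widehat f-f_0\|_\infty^2$. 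Plugging $N\asymp n^{p/(2\alpha+p)}/\log n$ into the bound~(\ref{eqsch1}), the ``fast'' term $N/n$ is of order $n^{-2\alpha/(2\alpha+p)}/\log n$ while the ``slow'' term $N^{-\alpha/p}$ is of order $n^{-\alpha/(2\alpha+p)}\log^{\alpha/p}n$; the latter dominates (since $2\alpha/(2\alpha+p)>\alpha/(2\alpha+p)$), so $\xi_n=O(n^{-2\alpha/(2\alpha+p)}\log^{2\alpha/p}n)$.

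Combining the three bounds, $\int d^2(P_\theta,P_0)\widehat q(\theta)\,d\theta$ is at most a constant times $n^{-2\alpha/(2\alpha+p)}$ times $\log^{c}n$ with $c=\max\{2\delta+2,\,2,\,2\alpha/p\}$; choosing $\delta\ge\max\{1,\alpha/p\}$ makes $c=2\delta+2$, and renaming $\delta+1$ as the $\delta$ of the statement yields $\int d^2(P_\theta,P_0)\widehat q(\theta)\,d\theta\le C''[n^{-\alpha/(2\alpha+p)}\log^\delta n]^2$ with the required $\delta>1$, holding with dominating probability because Theorem~\ref{thm:d2bound} does (here $n\varepsilon_n^2\to\infty$, so the $e^{-Cn\varepsilon_n^2}$ exceptional probability is negligible). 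This argument also supplies the intermediate rate claim $\max(\xi_n,r_n,\varepsilon_n^2)=O(n^{-2\alpha/(2\alpha+p)}\log^\delta n)$ stated just before Corollary~\ref{corminimax}. I do not anticipate a genuine obstacle: the proof is pure bookkeeping, and the only spots needing care are confirming that the $N/n$ term in~(\ref{eqsch1}) is indeed dominated by $N^{-\alpha/p}$ for the chosen $N$, and that the parameter-bound compatibility $1\le B$ makes Lemma~\ref{lmsch1}'s network admissible in the infimum defining $\xi_n$.
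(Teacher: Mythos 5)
Your proposal is correct and follows exactly the route the paper intends: the paper's own proof of Corollary \ref{corminimax} is just the remark that it is ``directly implied by Theorem \ref{thm:d2bound}'' together with the order bound $\max(\xi_n,r_n,\varepsilon_n^2)=O(n^{-2\alpha/(2\alpha+p)}\log^{\delta}n)$ stated before the corollary, and your bookkeeping (including the check that the Lemma \ref{lmsch1} network is admissible in the infimum defining $\xi_n$ because its parameters are bounded by $1\leq B$) correctly fills in the details the paper omits.
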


Corollary \ref{corminimax} establishes the rate minimaxity (up to a logarithmic factor) of variational sparse DNN inference. The established rate matches the contraction rate of the true Bayesian posterior (\cite{Polson2018posterior}) and therefore implies that there is no sacrifice in statistical rate with variational inference. Note that
(\ref{postcon}) also implies that the VB posterior mass of $\{d(P_\theta,P_0)\geq C''n^{-\alpha/(2\alpha+p)}
\log^{\delta}(n)\}$ converges to zero in probability, hence almost all of the VB posterior mass contracts towards a small Hellinger ball with (near-) minimax radius centered at $P_0$. 

The choices of $N$ and $s$ in (\ref{choices}), although lead to rate-minimaxity, relies on the smoothness parameter $\alpha$ which is usually unknown in practice. Therefore, the adaptive variational modeling discussed in Section \ref{sec:adapt} can be implemented here to select a reasonable $N$ and $s$ adaptively, such that the rate (near-)minimax convergence still holds.

\begin{cor}\label{cor2}
Assume $f_0 \in \mathcal{H}^{\alpha}_p$ for some unknown $\alpha > 0$, where $p = O(1)$. Choose $L$ as in (\ref{choices}) and let $N$ and $s$ follow the prior (\ref{prior2}). Then result (\ref{postcon}) still holds for the adaptive variational approach.
\end{cor}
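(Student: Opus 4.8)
The plan is to derive Corollary~\ref{cor2} as a direct specialization of Theorem~\ref{adapt}, combined with the approximation bound of Lemma~\ref{lmsch1}. First I would verify that the choices dictated by the H{\"o}lder problem satisfy the hypotheses of Theorem~\ref{adapt}, namely Conditions~\ref{cond1}--\ref{cond3} together with the requirement $n\varepsilon_n^{*2}/s^* \succ \lambda_s \geq aL\log n$ on the prior hyperparameter. With $L$ fixed as in (\ref{choices}), so that $L \asymp \log^2 n$ (since $p=O(1)$), and with the "oracle" width/sparsity $N^*, s^*$ taken (up to constants) as the $N,s$ from (\ref{choices}), we have $N^* \asymp n^{p/(2\alpha+p)}/\log n$ and $s^* \asymp N^* L \asymp n^{p/(2\alpha+p)}\log n$. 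Plugging these into the definitions of $r_n^*$ and $\varepsilon_n^{*2}$ gives both of order $n^{-2\alpha/(2\alpha+p)}\log^{2\delta+O(1)} n$, and from (\ref{eqsch1}) the approximation error $\xi_n^* = \|\widehat f - f_0\|_\infty^2$ is also $O(n^{-2\alpha/(2\alpha+p)}\log^{O(1)} n)$; this is exactly the content of the sentence following Lemma~\ref{lmsch1}. Hence Condition~\ref{cond2} ($r_n^* \asymp \xi_n^*$) holds up to logarithmic factors — and I would note that the proof only needs them balanced up to such factors since the final rate in (\ref{postcon}) already carries a $\log^\delta n$ slack. Condition~\ref{cond1} holds because $s^* \log(nL/s^*) \asymp s^* L \log(pN^*)$ is polynomially smaller than $n$ (it is $n^{p/(2\alpha+p)}$ times logarithmic factors, and $p/(2\alpha+p)<1$), yet tends to infinity; Condition~\ref{cond3} ($s^* \geq 12pN^* + L$) holds because $s^* \asymp N^* L$ and $L \to \infty$.

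Next I would check the admissibility of $\lambda_s$: one needs a constant $a>0$ and a sequence $\lambda_s$ with $aL\log n \leq \lambda_s \prec n\varepsilon_n^{*2}/s^*$. Since $L\log n \asymp \log^3 n$ while $n\varepsilon_n^{*2}/s^* \asymp (n \cdot n^{-2\alpha/(2\alpha+p)})/ (n^{p/(2\alpha+p)}) \cdot \mathrm{polylog} = n^{(p-2\alpha)/(2\alpha+p)} \cdot n^{-p/(2\alpha+p)} \cdot \mathrm{polylog}$... more carefully, $n\varepsilon_n^{*2} \asymp s^* L \log(pN^*)\log^{2\delta} n \asymp s^* \cdot \mathrm{polylog}(n)$, so $n\varepsilon_n^{*2}/s^* \asymp \mathrm{polylog}(n)$, which is a poly-log factor larger than $L\log n \asymp \log^3 n$ provided $\delta$ is taken large enough; so one simply picks $\lambda_s = aL\log n$ with a suitable constant and confirms it sits strictly below $n\varepsilon_n^{*2}/s^*$. (This is the same hyperparameter calibration already used implicitly in Corollary~\ref{corminimax}'s non-adaptive counterpart.) Once all conditions of Theorem~\ref{adapt} are verified, (\ref{d2bound+}) gives $\int d^2(P_\theta,P_0)\widehat q(\theta)\,d\theta \leq C''[\varepsilon_n^{*2} + r_n^* + \xi_n^*]$, and substituting the three rates computed above yields the bound $C''[n^{-\alpha/(2\alpha+p)}\log^\delta n]^2$ of (\ref{postcon}), for a possibly enlarged $\delta>1$ absorbing the extra logarithmic powers.

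The one genuine subtlety — and the step I would flag as the main obstacle — is that Theorem~\ref{adapt} is stated in terms of a fixed pair $(N^*,s^*)$ defined as the minimizer of $r_n + \xi_n$, whereas Lemma~\ref{lmsch1} only exhibits one particular admissible triple $(L,N,s)$ achieving the H{\"o}lder approximation rate; it does not assert this triple is the exact minimizer. The resolution is that Theorem~\ref{adapt}'s conclusion is monotone in the sense that any near-minimizer works: the true minimizer $(N^*,s^*)$ of $r_n+\xi_n$ can only do better than the Schmidt-Hieber choice, so $r_n^* + \xi_n^* \leq r_n(L,N,s) + \xi_n(L,N,s) = O(n^{-2\alpha/(2\alpha+p)}\mathrm{polylog}\,n)$, and likewise $\varepsilon_n^{*2} \asymp r_n^*$ up to logs. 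I would make this explicit: define $(N^*,s^*)$ as in Section~\ref{sec:adapt}, observe $r_n^* + \xi_n^* \leq r_n(L,N,s)+\xi_n(L,N,s)$ for the $(N,s)$ of (\ref{choices}), bound the right side via (\ref{eqsch1}), and then one still must separately verify Conditions~\ref{cond1}--\ref{cond3} \emph{for the true} $(N^*,s^*)$ — this requires knowing $(N^*,s^*)$ are not wildly smaller than the Schmidt-Hieber values, which follows since $\xi_n^*$ cannot beat the minimax approximation lower bound, forcing $N^*$ to be at least of order $n^{p/(2\alpha+p)}/\mathrm{polylog}$ and hence $s^*$ correspondingly large. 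Modulo this bookkeeping, the corollary is immediate.
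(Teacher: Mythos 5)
Your overall strategy --- instantiate the adaptive Theorem~\ref{adapt} with the Schmidt--Hieber triple from Lemma~\ref{lmsch1} and absorb the bookkeeping into the $\log^\delta n$ factor --- is the paper's strategy, and you correctly isolate the one nontrivial point: Theorem~\ref{adapt} is phrased for the \emph{exact} minimizer $(N^*,s^*)$ of $r_n+\xi_n$, while Lemma~\ref{lmsch1} only exhibits one admissible $(N,s)$. The divergence is in how this is resolved, and your resolution has a genuine gap. You propose to keep $(N^*,s^*)$ as the exact minimizer, bound $r_n^*+\xi_n^*$ by the Schmidt--Hieber value, and then verify Conditions~\ref{cond1}--\ref{cond3} for the exact minimizer by arguing that ``$\xi_n^*$ cannot beat the minimax approximation lower bound, forcing $N^*$ to be at least of order $n^{p/(2\alpha+p)}/\mathrm{polylog}$.'' That step fails: minimax approximation lower bounds are worst-case over the class $\mathcal{H}^\alpha_p$, not pointwise in $f_0$. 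For an individual $f_0\in\mathcal{H}^\alpha_p$ that happens to be exactly (or nearly) representable by a tiny sparse ReLU network --- $f_0\equiv 0$ is the extreme case, with $\xi_n(L,N,s)=0$ for every $(N,s)$ --- the exact minimizer of $r_n+\xi_n$ is driven to $s^*=0$, and Conditions~\ref{cond1} (the lower bound $1\prec\cdots$), \ref{cond2}, and \ref{cond3} all fail for it. So the route ``verify the conditions for the true minimizer'' cannot be completed in general, and your appeal to a lower bound that the paper never supplies would not rescue it.

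The paper's resolution (stated in the appendix remark on Corollaries 6.1--6.4) is the one you gesture at in passing but then abandon: the proof of Theorem~\ref{adapt} never uses that $(N^*,s^*)$ exactly minimizes $r_n+\xi_n$; it only uses that the reference pair satisfies Condition~\ref{cond1}, Condition~\ref{cond3}, and $\xi_n^*=O(r_n^*)$ (a relaxation of Condition~\ref{cond2}). One therefore simply declares the Schmidt--Hieber $(N,s)$ of (\ref{choices}) to be the reference pair, checks those conditions for it (as you already do in your first two paragraphs), and reads off (\ref{d2bound+}); no statement about the actual minimizer is needed. If you restructure your argument this way, the rest of your calculation goes through. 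Two minor slips worth fixing: with $p=O(1)$ the depth in (\ref{choices}) gives $L\asymp\log n$, not $\log^2 n$ (harmless, since it only shifts the power of $\log n$), and the balance $\xi_n=O(r_n)$ for the Schmidt--Hieber pair holds only up to a polylogarithmic factor depending on $\alpha/p$, which is the same looseness the paper tolerates and which is absorbed into the final $\log^\delta n$.
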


\subsection{Teacher-student framework} \label{sec:teacher}
Under the H{\"o}lder smooth assumption, the rate of convergence $n^{-\alpha/(2\alpha+p)}$ suffers from the curse of dimensionality. 
Note that this rate merely represents the worse-case analysis among all H{\"o}lder smooth functions, which may not be suitable for real structured dataset. Hence, in this section, we are interested in the teacher-student framework, i.e., the underlying $f_0$ is exactly an unknown fixed sparse ReLU network (so-called teacher network), that is, $f_0\in\mathcal{F}(L_0,\boldsymbol{p}_0,s_0)$ for some $L_0$, $\boldsymbol{p}_0=(p_{0,1},\dots,p_{0,L_0})'$ and $s_0$, and its network parameter is denoted by $\theta_0$.

Our variational Bayes modeling with spike and slab prior can be used to train the so-called student network, based on data generated by the  teacher network. Adopting this teacher-student framework can better facilitate the understanding of how deep neural networks work in high-dimensional data as it provides an explicit target function with bounded complexity.

When certain information of teacher network structure is available, we have the following result.
\begin{cor}\label{cor3}
Under the teacher-student framework, if we choose  $L=L_0$, $s\geq s_0$ and $N\geq \max_{1\leq i\leq L_0} p_{0,i}/(12p)$, $B_0\geq\|\theta_0\|_\infty$ (under uniform prior) and
$\max\{Ls\log(pN), s\log(nL/s)\}=o(n)$ holds, then our variational Bayes approach satisfies
\begin{align}
\int &d^2(P_{\theta}, P_0)\widehat{q}(\theta)d\theta\leq C''\nonumber\\
&\qquad \qquad \quad \left({\frac{s\log(nL/s) + Ls\log(pN)}{n}}\log^{2\delta}(n)\right), \label{postcon2}
\end{align}
with dominating probability, for some constant $C''>0$ and any $\delta>1$.
\end{cor}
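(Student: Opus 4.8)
The plan is to derive Corollary~\ref{cor3} as an immediate specialization of Theorem~\ref{thm:d2bound}: in the teacher--student setup with the prescribed architecture, the approximation error $\xi_n$ vanishes identically, so the right-hand side of~(\ref{d2bound}) reduces to the estimation error $\varepsilon_n^2$ and the variational error $r_n$, both of which are already of the advertised order.

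\emph{First}, I would show that $f_0=f_{\theta_0}$ is exactly realizable inside the student class $\mathcal F(L,\boldsymbol p,s)$ with $\boldsymbol p=(12pN,\dots,12pN)$ and weights bounded by a suitable constant $B$. Because $L=L_0$ the depths agree, and because $12pN\geq\max_{1\leq i\leq L_0}p_{0,i}\geq p_{0,i}$ for every $i$, each hidden layer of the teacher embeds into a width-$12pN$ layer by appending ``dead'' units whose incoming weights, outgoing weights and bias are all zero. This embedding leaves $f_{\theta_0}$ unchanged, preserves the number of nonzero coordinates ($\|\theta_0\|_0=s_0\leq s$), and does not enlarge $\|\theta_0\|_\infty$. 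Under the uniform slab $B=B_0\geq\|\theta_0\|_\infty$ by assumption; under the Gaussian slab one may take $B=\max(2,\|\theta_0\|_\infty)$, a fixed constant, since $\mathcal N(0,\sigma_0^2)$ has unbounded support. In either case the embedded parameter lies in $\Theta_B(L,\boldsymbol p,s)$, whence
\[
\xi_n=\inf_{\theta\in\Theta_B(L,\boldsymbol p,s)}\|f_\theta-f_0\|_\infty^2\leq\|f_{\theta_0}-f_0\|_\infty^2=0 .
\]

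\emph{Second}, I would check the hypotheses of Theorem~\ref{thm:d2bound}: the equal-width requirement of Lemmas~\ref{thmbound2} and~\ref{lmbound1} holds by construction, and the rate condition $\max\{s\log(nL/s),Ls\log(pN)\}=o(n)$ is assumed. Theorem~\ref{thm:d2bound} then yields, on the intersection of the events of Lemmas~\ref{thmbound2} and~\ref{lmbound1} (which is still of dominating probability because $n\varepsilon_n^2\to\infty$, the exceptional term $e^{-Cn\varepsilon_n^2}$ being negligible),
\[
\int d^2(P_\theta,P_0)\widehat q(\theta)\,d\theta\;\leq\;C\varepsilon_n^2+3C'r_n+3C'\xi_n\;=\;C\varepsilon_n^2+3C'r_n .
\]

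\emph{Third}, I would substitute $\varepsilon_n^2=M^2\frac{s\log(nL/s)+Ls\log(pN)}{n}\log^{2\delta}(n)$ and $r_n=\frac{Ls}{n}\log(12BpN)+\frac{s}{n}\log(nL/s)$. Since $B$ is a fixed constant, $\log(12BpN)=O(\log(pN))$, so $r_n=O\!\left(\frac{s\log(nL/s)+Ls\log(pN)}{n}\right)$, which is dominated by $\varepsilon_n^2$ as $\log^{2\delta}(n)\geq1$; hence $C\varepsilon_n^2+3C'r_n=O(\varepsilon_n^2)$, and absorbing all constants into $C''$ gives~(\ref{postcon2}). The only step requiring genuine care is the first one: one must confirm that padding the variable-width teacher to the common width $12pN$ with dead units truly preserves $f_{\theta_0}$ while simultaneously respecting the sparsity budget $s$ and the slab-support bound $B$; all remaining steps are a direct plug-in, and---unlike Corollary~\ref{cor2}---no adaptive-selection machinery is needed here because $L$, $N$ and $s$ are held fixed.
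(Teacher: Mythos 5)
Your proposal is correct and follows exactly the route the paper intends: its proof of this corollary is the single remark that it is ``directly implied by Theorem 4.1'' (Theorem \ref{thm:d2bound}), and your argument simply fills in the details — embedding the teacher into the equal-width student class so that $\xi_n=0$, then reading off the rate from $\varepsilon_n^2$ and $r_n$. No discrepancy to report.
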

The choice of ($N, s$) means that we delibrately choose a wider and denser network structure, which ensures that the approximation error $\xi_n=0$. 

When the information of $s_0$ and $\boldsymbol{p}_0$ is not available, by adopting the adaptive variational modeling 
we also have the following result: 
\begin{cor}\label{cor4}
If the teacher network structure satisfies that
$\max\{L_0s_0\log(p\max p_{0,i}), s_0\log(nL_0/s_0)\}=o(n^{\alpha})$ for some $\alpha\in(0,1)$, and we choose $L=L_0$, 
and let $N$ and $s$ follow the prior (\ref{prior2}), $B_0\geq\|\theta_0\|_\infty$ (under uniform prior), then our adaptive variational Bayes approach satisfies
\begin{align}
&\int d^2(P_{\theta}, P_0)\widehat{q}(\theta)d\theta \leq C''\nonumber\\
&\: \left({\frac{s_0\log(nL_0/s_0) + L_0s_0\log(p\max p_{0,i})}{n}}\log^{2\delta}(n)\right), \label{postcon3}
\end{align}
with dominating probability, for any $\delta>1$ and some constant $C''>0$.
\end{cor}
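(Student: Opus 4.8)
The plan is to obtain (\ref{postcon3}) by re-running the adaptive argument behind Theorem~\ref{adapt}, the key simplification over the H\"older case (Corollary~\ref{cor2}) being that here the reference architecture reproduces $f_0$ exactly, so its approximation error vanishes. I would first fix the reference structure $(N^*,s^*)=\arg\min_{N,s}\{r_n(s,L_0,N)+\xi_n(s,L_0,N)\}$ and record the crucial fact that $\xi_n^*=0$: with $L=L_0$ and $B_0\ge\|\theta_0\|_\infty$, the teacher network $f_0\in\mathcal F(L_0,\boldsymbol p_0,s_0)$ embeds verbatim into $\mathcal F(L_0,\boldsymbol p_N^{L_0},s)$ (keep the nonzero entries of $\theta_0$ in the corresponding positions, pad all remaining weights and biases with zeros) as soon as $12pN\ge\max_i p_{0,i}$ and $s\ge s_0$; on that set $\inf_\theta\|f_\theta-f_0\|_\infty^2=0$. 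Since $r_n(s,L_0,N)$ is increasing in $N$ and in $s$ (for $s=o(n)$), the pair $(N_0,s_0)$ with $N_0=\lceil \max_i p_{0,i}/(12p)\rceil$ is an admissible minimizing candidate, so $r_n^*+\xi_n^*\le r_n(s_0,L_0,N_0)$ and $\xi_n^*=0$ (with $N^*\asymp\max_i p_{0,i}/p$ and $s^*$ of order $s_0$).

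Next I would feed $(N^*,s^*)$ into the adaptive machinery of Section~\ref{sec:adapt}. Lemmas~\ref{thmbound2} and~\ref{lmbound1} are unconditional, so what must be checked are Conditions~\ref{cond1}--\ref{cond3} and the admissibility of $\lambda_s$. Condition~\ref{cond1} follows from the hypothesis $\max\{L_0s_0\log(p\max_i p_{0,i}),\,s_0\log(nL_0/s_0)\}=o(n^\alpha)$ together with $\max\{L_0s^*\log(pN^*),\,s^*\log(nL_0/s^*)\}\le n\,r_n^*\le n\,r_n(s_0,L_0,N_0)\lesssim L_0s_0\log(p\max_i p_{0,i})+s_0\log(nL_0/s_0)$, plus the trivial lower bound from the teacher being nontrivial. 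Condition~\ref{cond3}, $s^*\ge 12pN^*+L_0$, I would obtain after pruning any redundant hidden node of the teacher (which does not change $f_0$): then $s_0$ exceeds the total node count and the depth, giving $s_0\gtrsim \max_i p_{0,i}+L_0\gtrsim 12pN_0+L_0$ whenever $\max_i p_{0,i}\gtrsim p$; in the residual regime one harmlessly enlarges $s^*$ to $\max(s^*,12pN^*+L_0)=o(n^\alpha)$, which leaves the final rate in the same form. Condition~\ref{cond2}, $r_n^*\asymp\xi_n^*$, is vacuous since $\xi_n^*=0$, so I would reopen the proof of Theorem~\ref{adapt} and confirm that Condition~\ref{cond2} is used there only to merge the three error terms (automatic here) and to prevent the penalized-ELBO maximizer $(\widehat N,\widehat s)$ from over-inflating (still enforced by the penalty $\log\pi(N,s)$ in (\ref{eq:pelbo}) and by $\xi_n\equiv0$ above the threshold). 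Lastly, $\lambda_s=aL_0\log n$ is admissible: Condition~\ref{cond1} yields $s^*=o(n^\alpha)$, hence $\log(nL_0/s^*)\gtrsim\log n$, so $n\varepsilon_n^{*2}/s^*$ carries a spare $\log^{2\delta}(n)$ factor over $aL_0\log n$.

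With these verifications in hand, Theorem~\ref{adapt} (equivalently, its proof) delivers $\int d^2(P_\theta,P_0)\widehat q(\theta)\,d\theta\le C''[\varepsilon_n^{*2}+r_n^*+\xi_n^*]=C''[\varepsilon_n^{*2}+r_n^*]$ with dominating probability, so it only remains to plug in the reference structure. Since $12pN^*\asymp\max(\max_i p_{0,i},p)$ we have $\log(12BpN^*)\lesssim\log(p\max_i p_{0,i})$, and with $s^*$ of order $s_0$,
\[
r_n^*=\frac{L_0 s^*}{n}\log(12BpN^*)+\frac{s^*}{n}\log(nL_0/s^*)\ \lesssim\ \frac{L_0 s_0\log(p\max_i p_{0,i})+s_0\log(nL_0/s_0)}{n},
\]
and likewise $\varepsilon_n^{*2}\lesssim\frac{L_0 s_0\log(p\max_i p_{0,i})+s_0\log(nL_0/s_0)}{n}\log^{2\delta}(n)$; adding the two reproduces the bound (\ref{postcon3}).

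The hard part, I expect, will be the second step: making the general adaptivity theorem genuinely cover the teacher--student degeneracy. Condition~\ref{cond2} is literally false when $\xi_n^*=0$, so one cannot cite Theorem~\ref{adapt} verbatim and must reopen its proof to check that its single use of Condition~\ref{cond2} (merging the error terms and controlling the selected $\widehat N,\widehat s$) survives; and Condition~\ref{cond3} is delicate precisely because the mandatory factor $12p$ in the student width can exceed the teacher width, which is where the ``prune redundant nodes / enlarge $s^*$ by a harmless amount'' normalization is needed to keep the rate in the advertised form. Everything else --- the embedding that gives $\xi_n^*=0$, the rate arithmetic, and the existence of an admissible $\lambda_s$ --- is routine.
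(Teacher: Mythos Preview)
Your approach is essentially correct and matches the paper's. Both recognize that Condition~\ref{cond2} ($r_n^*\asymp\xi_n^*$) cannot hold as stated in the teacher--student setting, and that the proof of Theorem~\ref{adapt} (specifically Lemma~\ref{prior3}, where Condition~\ref{cond2} enters only to secure $\xi_n^*=o(\varepsilon_n^{*2})$) really requires nothing more than the one-sided relation $\xi_n^*=O(r_n^*)$.

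The paper's packaging differs slightly from yours and is a bit cleaner. Rather than keep $(N^*,s^*)$ as the literal minimizer of $r_n+\xi_n$ and then reopen the proof, the paper remarks (in its discussion for Corollaries~\ref{cor2} and~\ref{cor4}) that Theorem~\ref{adapt} in fact holds for \emph{any} reference pair $(N^*,s^*)$ satisfying Conditions~\ref{cond1}, \ref{cond3} and $\xi_n^*=O(r_n^*)$, not only the minimizer. This lets one simply take $(N^*,s^*)=(N_0,s_0)$ with $N_0=\lceil\max_i p_{0,i}/(12p)\rceil$, for which $\xi_n^*=0$ by the embedding you describe, and the rate arithmetic is immediate. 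That framing also sidesteps a small slip in your write-up: you assert $\xi_n^*=0$ \emph{at the minimizer}, but the inequality $r_n^*+\xi_n^*\le r_n(s_0,L_0,N_0)$ alone does not rule out a strictly smaller minimizing pair with positive approximation error. Your downstream arguments do not actually depend on this --- what you use is $r_n^*+\xi_n^*\le r_n(s_0,L_0,N_0)$ together with the relaxation $\xi_n^*=O(r_n^*)$ --- so only the narrative needs adjusting. Freeing the reference pair likewise streamlines your handling of Condition~\ref{cond3}: once $(N^*,s^*)$ need not be the exact minimizer, you may enlarge $s^*$ as required without further argument, exactly as you already suggest.
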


The above two corollaries show that, under the teacher-student framework, the input dimension $p$ (i.e., input layer width) and hidden layer width $\boldsymbol{p_0}$ have at most logarithmic effect on the VB posterior convergence rate. Therefore, it doesn't suffer from the curse of dimensionality.

\section{Convergence under $L_2$ Norm} \label{sec:gen}
Our main theorems \ref{thm:d2bound} and \ref{adapt} concern the posterior convergence with respect to the Hellinger metric. Although commonly used in the Bayesian literature (\cite{ghosal2007convergence,Pati2018on, Zhang2019Convergence}),  Hellinger distance is of less practical interest than $L_2$ norm, i.e., $\mathbb{E}_X|f_{\theta}(X) - f_0(X)|^2$, for regression problems. However, a result directly addressing the $L_2$ convergence may not be reasonable due to the extreme flexibility of DNN models. For instance, given $p=1$, two ReLU DNN networks $f_\theta(x) \equiv 0$ and $f_{\theta'}(x) \equiv M\sigma (x-1+\varepsilon)$ can have arbitrarily large $L_2$ distance when $M$ is sufficiently huge, but are impossible to be discriminated when $\varepsilon$ is so tiny that no sampled $X_i$ visits the interval $[1-\varepsilon,1]$. 

Accordingly, our $L_2$ convergence result will exclude the ``irregular'' DNN model $f_\theta$'s whose $L_2$ distances from $f_0$ are mostly contributed by the integral of $[f_\theta(x)-f_0(x)]^2$ over some tiny-measure subset of $[-1,1]^p$.
To be more precise, we define the $L_2$ distance between $f_{\theta}$ and $f_0$ as $L_2^2(f_\theta,f_0)=\mathbb{E}_X|f_{\theta}(X)-f_{0}(X)|^2$, and let $\mathcal{G}\subset\mathcal F(L, \boldsymbol{p}, s )$ be the subset class of all ``regular'' DNNs that satisfy
\[
\mathbb{E}_{X}\{|f_{\theta}(X)-f_0(X)|^2 1(X \in \mathcal S)\}
\geq \kappa L^2_2(f_0,f_\theta),
\]
for some constant $0<\kappa\leq 1$, where  
\[
\mathcal S =\{X:|f_{\theta}(X)-f_0(X)|^2\leq \gamma_nL^2_2(f_0,f_\theta) \},
\]
for some $\gamma_n\rightarrow \infty$.
 $\mathcal{G}$ represents the DNNs that possesses a large enough expected square $L_2$ distance between $f_{\theta}$ and $f_0$ on a set $\mathcal S$ where $|f_{\theta}(X) - f_0(X)|^2$ is upper bounded, and the integral of $[f_\theta(x)-f_0(x)]^2$ over $\mathcal S^c$ doesn't make dominating contribution to $L^2_2(f_0,f_\theta)$. Naturally, $\mathcal G$ excludes the cases when $L^2_2(f_{\theta}, f_0)$ is mainly determined by the data from only a small set of the support of $X$.

Let $\widetilde \varepsilon_n^2$ denote the Hellinger convergence rate in Theorem \ref{thm:d2bound} or \ref{adapt}, i.e., $\widetilde\varepsilon^2_n$ is of the same order as the RHS of equation (\ref{d2bound}) or (\ref{d2bound+}).
We have the following convergence result regarding $L_2$ metric, which states that the variational posterior mass over the irregular DNNs, which have $L_2$ error greater than $M_n \widetilde \varepsilon^2_n$, is negligible. 

\begin{thm} \label{thm:gen}
Given any pre-specified network family as Theorem \ref{thm:d2bound} or under the adaptive variational Bayes modeling as Theorem \ref{adapt}, if $\gamma_n\widetilde \varepsilon^2_n=o(1)$, then we have that w.h.p.
\[
\int_{\mathcal{G}\cap\{L^2_2(f_0, f_{\theta})\geq M_n\widetilde \varepsilon_n^2\} } \widehat q(\theta)d\theta =o(1),
\]
for any sequence $M_n\rightarrow\infty$.
\end{thm}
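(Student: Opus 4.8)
\emph{Proof outline.} The plan is to transfer the Hellinger-metric guarantee of Theorem~\ref{thm:d2bound} (respectively Theorem~\ref{adapt} in the adaptive setting), which states that $\int d^2(P_\theta,P_0)\widehat q(\theta)\,d\theta\le C\widetilde\varepsilon_n^2$ holds with high probability, into the claimed $L_2$ statement by means of (i) a pointwise lower bound for $d^2(P_\theta,P_0)$ in terms of $L^2_2(f_0,f_\theta)$ that is valid on the regular class $\mathcal G$, and (ii) Markov's inequality applied to the variational posterior $\widehat q$. The restriction to $\mathcal G$ is exactly what makes step (i) possible: for an irregular $f_\theta$, $L^2_2(f_0,f_\theta)$ may be arbitrarily inflated by the values of $f_\theta-f_0$ on a vanishingly small region of $[-1,1]^p$, on which the Hellinger integrand never exceeds $1$, so no comparison can hold; the truncation set $\mathcal S$ in the definition of $\mathcal G$ precisely excludes this pathology.

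The key step is to show that for every $\theta\in\mathcal G$, writing $a_n(\theta):=\gamma_n L^2_2(f_0,f_\theta)/(8\sigma_\epsilon^2)$,
\[
d^2(P_\theta,P_0)\ \ge\ \frac{\kappa}{\gamma_n}\bigl(1-e^{-a_n(\theta)}\bigr).
\]
To see this, I would lower-bound the expectation defining $d^2(P_\theta,P_0)$ by restricting to $\{X\in\mathcal S\}$; on that event the exponent $[f_\theta(X)-f_0(X)]^2/(8\sigma_\epsilon^2)$ lies in $[0,a_n(\theta)]$, so concavity of $x\mapsto 1-e^{-x}$ together with $1-e^{0}=1$ gives $1-e^{-x}\ge x(1-e^{-a_n(\theta)})/a_n(\theta)$ on that range. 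Hence
\[
d^2(P_\theta,P_0)\ \ge\ \frac{1-e^{-a_n(\theta)}}{a_n(\theta)}\,\mathbb{E}_X\!\left[\frac{[f_\theta(X)-f_0(X)]^2}{8\sigma_\epsilon^2}\,1(X\in\mathcal S)\right]\ \ge\ \frac{1-e^{-a_n(\theta)}}{a_n(\theta)}\cdot\frac{\kappa\,L^2_2(f_0,f_\theta)}{8\sigma_\epsilon^2},
\]
the last inequality being the defining property of $\mathcal G$; substituting $L^2_2(f_0,f_\theta)=8\sigma_\epsilon^2 a_n(\theta)/\gamma_n$ collapses the right-hand side to the asserted bound.

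I would then combine the two ingredients. On $\{L^2_2(f_0,f_\theta)\ge M_n\widetilde\varepsilon_n^2\}$ one has $a_n(\theta)\ge b_n:=\gamma_n M_n\widetilde\varepsilon_n^2/(8\sigma_\epsilon^2)$, and since $1-e^{-x}$ is nondecreasing this yields $d^2(P_\theta,P_0)\ge(\kappa/\gamma_n)(1-e^{-b_n})$ uniformly over $\mathcal G\cap\{L^2_2(f_0,f_\theta)\ge M_n\widetilde\varepsilon_n^2\}$. Markov's inequality then gives, with high probability,
\[
\int_{\mathcal G\cap\{L^2_2(f_0,f_\theta)\ge M_n\widetilde\varepsilon_n^2\}}\widehat q(\theta)\,d\theta\ \le\ \frac{\int d^2(P_\theta,P_0)\widehat q(\theta)\,d\theta}{(\kappa/\gamma_n)(1-e^{-b_n})}\ \le\ \frac{C\,\gamma_n\widetilde\varepsilon_n^2}{\kappa\,(1-e^{-b_n})}.
\]
It remains to check the right-hand side is $o(1)$, which I would do with the elementary inequality $1-e^{-t}\ge(1-e^{-1})\min(t,1)$ for $t\ge0$: if $b_n\ge1$ the quantity is at most $C\gamma_n\widetilde\varepsilon_n^2/[\kappa(1-e^{-1})]=o(1)$ by the hypothesis $\gamma_n\widetilde\varepsilon_n^2=o(1)$; if $b_n<1$ the factors $\gamma_n\widetilde\varepsilon_n^2$ cancel and the quantity is at most $8C\sigma_\epsilon^2/[\kappa(1-e^{-1})M_n]=o(1)$ since $M_n\to\infty$.

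The argument is short once Theorem~\ref{thm:d2bound}/\ref{adapt} is in hand; the only genuinely substantive point is the $d^2$-versus-$L^2_2$ comparison, and there the ``hard part'' is conceptual rather than computational --- recognizing that $\mathcal G$ has been engineered precisely so that the concavity estimate has any content at all. A secondary technical care is to keep the dichotomy on $b_n$ uniform in $n$ rather than argue along subsequences, which the $\min(t,1)$ bound handles. The adaptive case requires no new idea, since only $\int d^2(P_\theta,P_0)\widehat q(\theta)\,d\theta\le C\widetilde\varepsilon_n^2$ is invoked, and Theorem~\ref{adapt} supplies it directly.
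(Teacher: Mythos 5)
Your proposal is correct and follows essentially the same route as the paper's proof: restrict the Hellinger integrand to $\mathcal S$, use the defining property of $\mathcal G$ together with the (concavity) bound $1-e^{-x}\ge x(1-e^{-a})/a$ to get $d^2(P_\theta,P_0)\ge(\kappa/\gamma_n)(1-e^{-a_n(\theta)})$, and then apply Markov's inequality to the Hellinger guarantee of Theorem \ref{thm:d2bound}/\ref{adapt}. The only (cosmetic) difference is that the paper inserts an intermediate sequence $\widetilde M_n$ with $\gamma_n\widetilde M_n\widetilde\varepsilon_n^2=o(1)$ to linearize $1-e^{-x}$, whereas you handle both regimes at once via $1-e^{-t}\ge(1-e^{-1})\min(t,1)$ and a case split on $b_n$.
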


\begin{remark}
In the literature, there do exist some direct results regarding $L_2$ convergence rate of DNN learning and these results usually rely on some regularity condition such as the $L_\infty$ boundedness of DNNs in the model space (\cite{Schmidt-Hieber2017Nonparametric, Polson2018posterior}).  However, in practice, it is usually infeasible to ensure that the trained DNN models meet the pre-specified bound, since the relationship between the magnitude of $\theta$ and $|f_\theta|_\infty$ is rather complicated.
\end{remark}



\section{Experiments}\label{exp}
\begin{table*}[!ht]
\centering
\caption{Results for teacher network experiment. The average test RMSE with standard error and average posterior number of edges with standard error are exhibited.}
\label{tb:teacher}
\begin{tabular}{lcccccccc}
\toprule
 & \multicolumn{4}{c}{\textbf{Test RMSE}} & \multicolumn{4}{c}{\textbf{\# of edges}} \\ \cmidrule(lr){2-5} \cmidrule(lr){6-9} 
\textbf{Width} & \textbf{ASVI}& \textbf{SVI} & \textbf{HS-BNN} & \textbf{Dense-BNN} & \textbf{ASVI} & \textbf{SVI}& \textbf{HS-BNN} & \textbf{Dense-BNN} \\ 
\midrule
2 &-& 2.193 $\pm$ 0.195 & 2.193 $\pm$ 0.163 & 2.131 $\pm$ 0.097 &-& 48.28 $\pm$ 2.099 & 51.00 $\pm$ 0.000 &51.00 $\pm$ 0.000  \\
4 &-& 1.636 $\pm$ 0.069 & 1.715 $\pm$ 0.160 & 1.591 $\pm$ 0.087 &-& 94.43 $\pm$ 4.499 & 109.0 $\pm$ 0.000 &109.0 $\pm$ 0.000  \\
6 &-& 1.210 $\pm$ 0.049 & 1.322 $\pm$ 0.179 & 1.190 $\pm$ 0.033 &-& 125.7 $\pm$ 8.805 & 175.0 $\pm$ 0.000 &175.0 $\pm$ 0.000  \\
8 &-& 1.065 $\pm$ 0.038 & 1.108 $\pm$ 0.048 & 1.046 $\pm$ 0.021 &-& 135.5 $\pm$ 10.87 & 249.0 $\pm$ 0.000 &249.0 $\pm$ 0.000  \\
10 &-& 1.014 $\pm$ 0.023 & 1.058 $\pm$ 0.029 & 1.014 $\pm$ 0.010 &-& 151.1 $\pm$ 13.25 & 331.0 $\pm$ 0.000 &331.0 $\pm$ 0.000  \\
12 &-& 1.019 $\pm$ 0.085 & 1.035 $\pm$ 0.016 & 1.010 $\pm$ 0.007 &-& 166.1 $\pm$ 14.41 & 421.0 $\pm$ 0.000 &421.0 $\pm$ 0.000  \\
14 &-& 1.018 $\pm$ 0.093 & 1.034 $\pm$ 0.010 & 1.011 $\pm$ 0.009 &-& 177.3 $\pm$ 15.62 & 519.0 $\pm$ 0.000 &519.0 $\pm$ 0.000  \\
16 &-& 1.011 $\pm$ 0.037 & 1.032 $\pm$ 0.010 & 1.009 $\pm$ 0.005 &-& 186.1 $\pm$ 16.48 & 625.0 $\pm$ 0.000 &625.0 $\pm$ 0.000  \\
18 &-& 1.005 $\pm$ 0.008 & 1.030 $\pm$ 0.010 & 1.010 $\pm$ 0.005 &-& 190.3 $\pm$ 15.87 & 739.0 $\pm$ 0.000 &739.0 $\pm$ 0.000  \\
20 &-& 1.003 $\pm$ 0.006 & 1.029 $\pm$  0.008 & 1.010 $\pm$ 0.007 &-& 192.5 $\pm$ 13.78 & 861.0 $\pm$ 0.000 &861.0 $\pm$ 0.000  \\ \midrule
Adaptive & 1.003 $\pm$ 0.010 & - & - & -& 155.9 $\pm$ 15.58 & - & - & -\\ 
\bottomrule
\end{tabular}
\end{table*}
In this section, we investigate the performance of the proposed Adaptive Sparse Variational Inference (ASVI) with Gaussian slab prior through empirical studies. To implement ASVI, after pre-specifying the depth $L$, one needs to assign prior distributions for $N$ and $s$ according to (\ref{prior2}), and assign uniform prior (\ref{prior}) over the network structure $\gamma$ given $s$. However, as emphasized in the introduction, it is not computationally feasible to solve ASVI, since the exact minimization of negative ELBO requires exhaustively search over all possible sparse network structures.
As a consequence, in this numerical studies section, an approximated solution of $\widehat q$ is used instead. The details of the approximation and implementation of ASVI are presented in Section 2 of the supplementary document. In short words, we integrate out the sparsity variable $s$ in the hierarchical prior (\ref{prior2}), and only consider the marginal modelling of $N$ and $\theta$. Given the width multiplier $N$, the maximized ELBO $\Omega(N)$ is obtained by back propagation with the help of some approximation and binary relaxation. The optimal structure is then selected by the penalized ELBO criterion similar to (\ref{eq:pelbo}). In this simulation, we typically specify 5-10 levels of width choices and compute $\Omega(N)$ for different $N$ in parallel.


For all the numerical studies, we use the VB posterior mean estimator $\widehat{f}=\sum^{30}_{i=1}f_{\theta_i}/30$ to assess the prediction accuracy, where $\theta_i$'s are randomly drawn from the VB posterior $\widehat{q}(\theta)$. We use $\widehat{s}=\sum^H_{i=1}\nu_i/H$ to measure the posterior network sparsity. We compare our method to Horseshoe BNN (HS-BNN) \cite{ghosh2017Model} and dense BNN \cite{Blundell2015weight}. 


\begin{figure}[h!]
    \centering
    \begin{subfigure}[b]{.24\textwidth}
        \centering
        \includegraphics[width=1\linewidth]{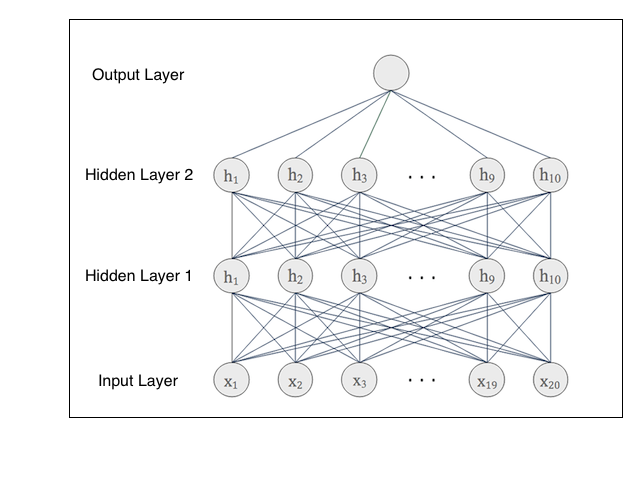}
        \caption{Teacher network}
    \end{subfigure}%
    \begin{subfigure}[b]{.24\textwidth}
        \centering
        \includegraphics[width=1\linewidth]{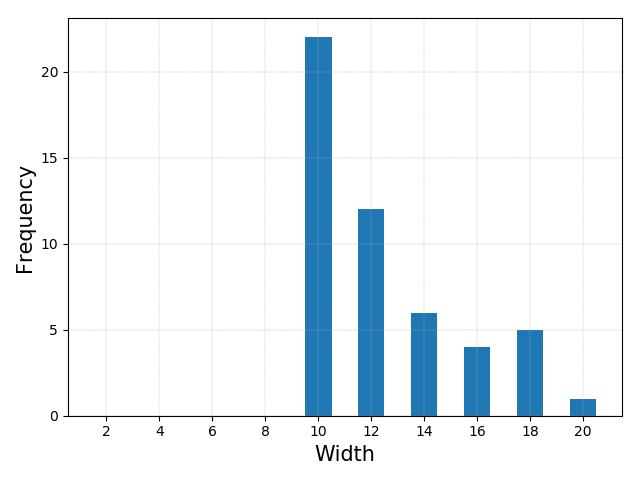}
        \caption{Selected width}
    \end{subfigure}
    \caption{(a) Teacher network with structure 20-10-10-1, where 50\% of the edges are set to 0 randomly. (b) Frequency of the selected width in 50 replications.}
    \label{fig:frequency}
\end{figure}

\begin{table}[h!]
\centering
\caption{Average test RMSE with standard error for UCI regression datasets.}
\label{tb:UCI}
\begin{tabular}{lccccc}
\toprule

\textbf{Dataset} & \textbf{n (p)} & \textbf{SVI} &\textbf{HS-BNN} & \textbf{PBP}   \\ 
\midrule
Kin8nm & 8192 (8) &0.08$\pm$0.00 & 0.08$\pm$0.00  & 0.10$\pm$0.00 \\
Naval & 11934 (16) & 0.00$\pm$0.00 & 0.00$\pm$0.00  & 0.01$\pm$0.00  \\
Power Plant & 9568 (4) & 4.02$\pm$0.18 & 4.03$\pm$0.15 &4.12$\pm$0.03  \\
Protein & 45730 (9) &4.36$\pm$0.04  & 4.39$\pm$0.04  & 4.73$\pm$0.01   \\
Wine & 1599 (11) & 0.62$\pm$0.03 &0.63$\pm$0.04  & 0.64$\pm$0.01 \\
Year & 515345 (90) & 8.85$\pm$NA & 9.26$\pm$NA  & 8.88$\pm$NA  \\ 
\bottomrule
\end{tabular}
\end{table}

\subsection{Simulation study}
We consider a simulated experiment under the teacher-student framework. As shown in Fig \ref{fig:frequency} (a), we use a 2-hidden-layer teacher network with ReLU activation, where the specific structure is 20-10-10-1. The edges of the teacher network are first randomly generated from $\mathcal{U}(0.5, 1.5)$ and then randomly set to 0 by a rate of 50\% to ensure a sparse structure. 
We fix the depth $L$ of student net to 2 in the experiment, and consider the width of student net to range from 2 to 20 with a increment of 2. We randomly generate 50 datasets of size $10000$ from the teacher network with random noise variance $\sigma_{\epsilon}=1$ for training, and the adaptive variational inference is performed on each of these datasets to select the best network structure. The remaining implementation details can be found in the supplementary document.

Fig \ref{fig:frequency} (b) plots the frequency of the selected width among the 50 replications. It shows that in most time the ASVI selects width 10 or 12, which is close to the true width. We compare the test Root Mean Squared Error (RMSE) of ASVI against non-adaptive SVI (i.e., ASVI without width selection), HS-BNN and Dense-BNN with all the choices of width. The result is displayed in Table \ref{tb:teacher}. It shows that ASVI achieves best test Root Mean Squared Error (RMSE), which is quite close to the random noise ($\sigma_{\epsilon}=1$). In addition, the number of edges selected by ASVI is also close to the ground truth (around 165.5).


\subsection{Real data}
We compare the performance of our method to others on UCI regression tasks and MNIST data. For UCI datasets, following the same experimental protocol as \cite{Lobato2015Probabilistic}, a single layer neural network of 50 units with ReLU activation is used for all the datasets, except for the larger ones "Protein" and "Year", where 100 units are used. For the smaller datasets, we randomly select 90\% and 10\% for training and testing respectively, and the process is repeated for 20 times. For "Protein", only 5 replication is performed. For "Year", where the training and testing datasets are predefined, the process is only done once. We compare our method to HS-BNN and probabilistic backpropagation (PBP) of \cite{Lobato2015Probabilistic}. For MNIST, we use a two hidden layer ReLU network with width of $\{400, 500, 600, 700, 800\}$. Other Implementation details can be found in the supplementary document.

Table \ref{tb:UCI} shows our method (SVI) performs as well as or better than the other methods on UCI datasets with pre-determined architecture. Figure \ref{fig:mnist} shows our method achieves best test accuracy for MNIST data, with a selected width of 700 and posterior sparsity of $6.01 \%$ (62855 edges) at epoch 300.

\begin{figure}[h!]
    \centering
    \includegraphics[width=0.9\linewidth]{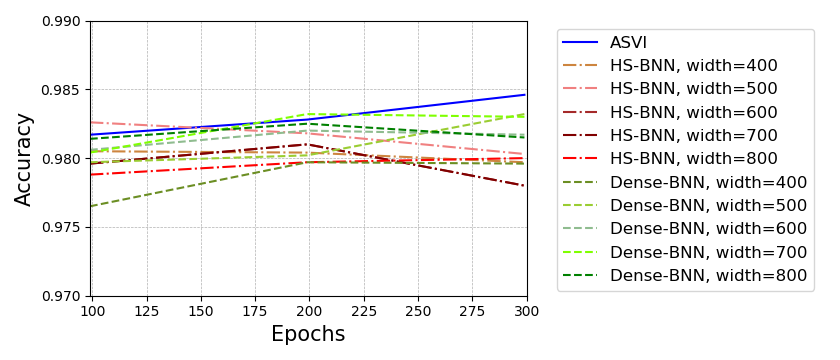}
    \caption{Test accuracy for MNIST data}
    \label{fig:mnist}
\end{figure}

\section{Summary}\label{dis}
In this work, we investigate the theoretical aspects of variational inference for sparse DNN models. 
Although theoretically sound, the spike and slab modeling with Dirac spike is difficult to implement in practice, and some continuous relaxation is required that deserves further theoretical investigation. In addition, despite the fact that the proposed uniform prior distribution for $s$ guarantees good theoretical properties, it is also not practical and some approximation is involved in our implementation. Therefore, some alternative choice of prior distribution could be investigated in the future.


%



\ifCLASSOPTIONcompsoc
  \section*{Acknowledgments}
\else
  \section*{Acknowledgment}
\fi

This work was completed in the fall of 2019 when Cheng was a member of Institute for Advanced Study, Princeton. Cheng acknowledges the hospitality of IAS and also financial support from Adobe Data Science Grant.
Dr. Song's research is partially supported by National Science Foundation grant DMS-1811812.

\ifCLASSOPTIONcaptionsoff
  \newpage
\fi



\bibliography{ref}

\appendices
\section{Technical Details}

The detailed proofs for our lemmas and theorems are included in this section. 

Throughout this section, make the following notations: denote the $n$ independent observations of $Y$ as $\boldsymbol{Y}^{(n)}=(Y_1, \dots, Y_n)$, denote $f_0(\boldsymbol{X}^{(n)})=(f_0(X_1), \ldots, f_0(X_n))$ and $f_{\theta}(\boldsymbol{X}^{(n)})=(f_{\theta}(X_1), \ldots, f_{\theta}(X_n))$.

\subsection{Proof of Lemma 4.1}

Lemma \ref{lmdonsker} restates the  Donsker and Varadhan's representation for the $\mbox{KL}$ divergence, its proof can be found in \cite{Boucheron2013Concentration}. 

\begin{lem}{\label{lmdonsker}}
For any probability measure $\lambda$ and any measurable function $h$ with $e^h \in L_1(\lambda)$,
$$
\log \int e^{h(\eta)}\lambda(d\eta) = \sup_{\rho}\left[\int h(\eta)\rho(d \eta) - \mbox{KL}(\rho\|\lambda).\right]
$$
\end{lem}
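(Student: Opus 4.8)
The plan is to prove this as the classical Gibbs variational (Donsker--Varadhan) principle in two steps: first show that the bracketed functional on the right never exceeds $\log\int e^{h}\,d\lambda$, and then exhibit a single probability measure that attains (or, in a degenerate case, approaches) the bound. Throughout, the supremum may be restricted to probability measures $\rho\ll\lambda$ with $\mbox{KL}(\rho\|\lambda)<\infty$, since for any other $\rho$ the bracketed expression is $-\infty$.

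For the upper bound, fix such a $\rho$ and let $g=d\rho/d\lambda$, which is positive $\rho$-almost everywhere. Writing $\int e^{h}\,d\lambda=\int (e^{h}/g)\,d\rho$ and using concavity of $\log$ (Jensen's inequality with respect to the probability measure $\rho$, valid here because $\int (e^{h}/g)\,d\rho=\int e^{h}\,d\lambda<\infty$ and $\log x\le x$ keeps the positive part of $\log(e^{h}/g)$ integrable),
\[
\log\!\int e^{h}\,d\lambda=\log\!\int \frac{e^{h}}{g}\,d\rho\;\geq\;\int \log\frac{e^{h}}{g}\,d\rho=\int h\,d\rho-\mbox{KL}(\rho\|\lambda).
\]
Taking the supremum over $\rho$ yields $\sup_{\rho}\bigl[\int h\,d\rho-\mbox{KL}(\rho\|\lambda)\bigr]\leq\log\int e^{h}\,d\lambda$.

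For the matching lower bound, invoke the hypothesis $e^{h}\in L_{1}(\lambda)$ to set $Z=\int e^{h}\,d\lambda\in(0,\infty)$ and define the tilted (Gibbs) measure $\rho^{\star}$ by $d\rho^{\star}/d\lambda=e^{h}/Z$. Then $\rho^{\star}$ is a probability measure with $\log(d\rho^{\star}/d\lambda)=h-\log Z$, so
\[
\int h\,d\rho^{\star}-\mbox{KL}(\rho^{\star}\|\lambda)=\int h\,d\rho^{\star}-\int\bigl(h-\log Z\bigr)d\rho^{\star}=\log Z,
\]
which together with the upper bound gives the claimed identity and identifies $\rho^{\star}$ as the maximizer.

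The only point requiring genuine care — and the step I would treat as the main (mild) obstacle — is the well-definedness of $\int h\,d\rho^{\star}$, so that the cancellation in the last display is legitimate. The negative part is harmless because $(-t)e^{t}\leq e^{-1}$ for $t\leq0$ forces $(h^{-})e^{h}\in L_{1}(\lambda)$, hence $\int h\,d\rho^{\star}\in(-\infty,\infty]$; if it equals $+\infty$ (so $\mbox{KL}(\rho^{\star}\|\lambda)=\infty$ and $\rho^{\star}$ is itself inadmissible), I would instead apply the already-proved identity to the truncations $h_{m}=h\wedge m$, whose Gibbs measures $\rho^{\star}_{m}$ satisfy $\int h\,d\rho^{\star}_{m}-\mbox{KL}(\rho^{\star}_{m}\|\lambda)\geq\int h_{m}\,d\rho^{\star}_{m}-\mbox{KL}(\rho^{\star}_{m}\|\lambda)=\log\int e^{h_{m}}\,d\lambda$, and then let $m\to\infty$ using monotone convergence ($\int e^{h_{m}}\,d\lambda\uparrow Z$); this recovers $\sup_{\rho}[\cdots]\geq\log Z$ even when the supremum is not attained. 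Beyond this extended-real-number bookkeeping there is no deeper difficulty: the heart of the argument is the one-line Jensen estimate together with the choice of the exponential-tilt measure.
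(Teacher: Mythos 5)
Your proof is correct and is essentially the standard Gibbs/Donsker--Varadhan argument (Jensen for the upper bound, the exponentially tilted measure for the lower bound, with careful truncation for the integrability edge case); the paper does not prove this lemma itself but cites the same classical argument from Boucheron et al. One cosmetic point: with $g=d\rho/d\lambda$ one only has $\int (e^{h}/g)\,d\rho=\int_{\{g>0\}}e^{h}\,d\lambda\leq\int e^{h}\,d\lambda$ rather than equality when $\lambda(\{g=0\})>0$, but the inequality goes in the direction you need, so the bound is unaffected.
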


The next lemma proves the existence of a testing function which can exponentially separate $P_0$ and $\{P_\theta: d(P_0,P_\theta)\geq \varepsilon_n, P_\theta\in \mathcal{F}(L,\boldsymbol{p},s)\}$. The existence of such testing function is crucial for Lemma 4.2.

\begin{lem}{\label{lmtesting}}
Let $\varepsilon_n=M\sqrt{\frac{s\log(nL/s) + Ls\log(pN)}{n}}\log^\delta(n)$ for any $\delta\geq1$ and some large constant M. Then there exists some testing function $\phi\in[0,1]$ and $C_1>0$, $C_2>1/3$, such that
\[
\begin{split}
\mathbb{E}_{P_0}(\phi)&\leq  \exp\{-C_1n\varepsilon_n^2\},\\
\sup_{\substack{P_{\theta} \in \mathcal{F}(L,\boldsymbol{p},s)\\ d(P_{\theta}, P_0)>\varepsilon_n}}\mathbb{E}_{P_{\theta}}(1-\phi)&\leq\exp \{-C_2nd^2(P_0,P_{\theta})\}.
\end{split}
\]
\end{lem}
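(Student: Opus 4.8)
The plan is to follow the classical testing construction of Ghosal--Ghosh--van der Vaart, specialized to the sparse ReLU network model. The building block is a \emph{local} test: for a single alternative $P_{\theta_1}$ with $d(P_0,P_{\theta_1})=\epsilon$, the Neyman--Pearson test based on $\prod_i p_{\theta_1}(D_i)/p_0(D_i)$ has both error probabilities bounded by the $n$-fold Hellinger affinity $(1-d^2(P_0,P_{\theta_1}))^n\le e^{-nd^2(P_0,P_{\theta_1})}$; upgrading this via the standard Le Cam--Birg\'e argument for testing against convex alternative classes yields a test $\phi_{\theta_1}$ with $\mathbb E_{P_0}\phi_{\theta_1}\le e^{-c_0 n\epsilon^2}$ and $\mathbb E_{P}(1-\phi_{\theta_1})\le e^{-c_0 n\epsilon^2}$ for every $P$ with $d(P,P_{\theta_1})\le \epsilon/c_1$, for universal constants $c_0,c_1>0$.

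The crux is the metric entropy of the alternative. Working first over networks with $\|\theta\|_\infty\le B$, I would bound the sup-norm $\eta$-covering number of $\mathcal F(L,\boldsymbol{p}_N^{L},s)\cap\{\|\theta\|_\infty\le B\}$ by $\exp\{C[\,s\log(eH/s)+s(L\log(BpN)+\log(1/\eta))\,]\}$, where $H=O(Lp^2N^2)$ is the length of $\theta$: the term $s\log(eH/s)$ counts the $\binom{H}{s}$ choices of active coordinates, and for each active weight a perturbation of size $\eta(12pNB)^{-L}$ changes the network output by at most $\eta$, since a single-weight perturbation is amplified by a factor at most $(12pNB)^L$ as it propagates through the $L$ layers, so discretizing the $s$ active weights costs $s[\log B+L\log(12pNB)+\log(1/\eta)]$. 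Since $d(P_\theta,P_{\theta'})^2\le\|f_\theta-f_{\theta'}\|_\infty^2/(8\sigma_\epsilon^2)$, this is also a Hellinger cover. Taking $\eta\asymp\varepsilon_n$ (so $\log(1/\eta)\lesssim\log n$) and absorbing the constant $B$ into the constants gives $\log N\lesssim s\log(nL/s)+Ls\log(pN)+s\log n$, and because $n\varepsilon_n^2=M^2[s\log(nL/s)+Ls\log(pN)]\log^{2\delta}(n)$ with $\delta\ge 1$ and $M$ large, $n\varepsilon_n^2$ dominates this entropy with an arbitrarily large multiplicative margin --- which is precisely why the extra $\log^\delta n$ factor is built into $\varepsilon_n$. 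Then I peel the alternative into Hellinger shells $S_j=\{\theta:\,j\varepsilon_n<d(P_\theta,P_0)\le(j+1)\varepsilon_n\}$, cover each $S_j$ by $N_j\le N(\varepsilon_n/c_1)$ balls of Hellinger radius $j\varepsilon_n/c_1$ centered in $S_j$, and set $\phi=\max_j\max_{\text{centers of }S_j}\phi_{\theta_1}$. A union bound gives $\mathbb E_{P_0}\phi\le\sum_{j\ge 1}N_j\,e^{-c_0 n j^2\varepsilon_n^2}\le e^{-C_1 n\varepsilon_n^2}$, and for $\theta\in S_j$, $\mathbb E_{P_\theta}(1-\phi)\le e^{-c_0 n j^2\varepsilon_n^2}\le e^{-C_2 n d^2(P_0,P_\theta)}$, with $C_2>1/3$ guaranteed by choosing $M$ large enough.

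To remove the magnitude restriction and obtain a test over all of $\mathcal F(L,\boldsymbol{p},s)$, I would further peel over dyadic magnitude shells $\|\theta\|_\infty\in(2^{k-1},2^k]$: the entropy of the $k$-th shell grows only additively in $k$ (an extra $skL\log 2$), while networks with very large weights that nonetheless lie at bounded Hellinger distance from $P_0$ are pinned down by saturation of the Hellinger distance --- on the event $|f_\theta(X)-f_0(X)|>F$ the Hellinger integrand exceeds $1-e^{-F^2/(8\sigma_\epsilon^2)}$, so $\mathbb P_X(|f_\theta(X)-f_0(X)|>F)\lesssim d^2(P_\theta,P_0)/(1-e^{-F^2/(8\sigma_\epsilon^2)})$, which lets one clip $f_\theta$ at a slowly growing level $F_n$ (polynomial in $n$) with a negligible change in the Hellinger distance, after which the bounded-case argument applies and the incurred $\log n$-type inflations are absorbed by the $\log^{2\delta}(n)$ slack. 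I expect the metric-entropy step to be the main obstacle: obtaining the sharp dependence $s\log(nL/s)+Ls\log(pN)$ requires carefully tracking the depth-wise amplification of a single-weight perturbation, and the magnitude-peeling/clipping argument must be executed so that the final $\phi$ simultaneously controls the $e^{-C_1 n\varepsilon_n^2}$ type-I error and the $e^{-C_2 n d^2(P_0,P_\theta)}$ type-II error with $C_2>1/3$ over the entire unbounded class.
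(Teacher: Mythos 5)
Your construction is essentially the paper's: a Le Cam--Birg\'e local test attached to each center of a Hellinger cover of the alternative, a union bound for the type-I error paid for by the metric entropy of $\mathcal F(L,\boldsymbol{p},s)$, and the observation that $n\varepsilon_n^2$ dominates that entropy thanks to the $\log^{2\delta}(n)$ slack built into $\varepsilon_n$. The paper simply cites Lemma 10 of \cite{Schmidt-Hieber2017Nonparametric} for the $\|\cdot\|_\infty$-covering bound $(s+1)\log\bigl(\tfrac{38}{\sqrt{8}\sigma_\varepsilon\varepsilon_n}L(12pN+1)^{2L}\bigr)$ where you re-derive it by tracking depth-wise amplification, and it does not peel into shells: it covers all of $\mathcal F(L,\boldsymbol{p},s)$ once by Hellinger balls of radius $\varepsilon_n/19$ and, for $\theta$ with $d(P_\theta,P_0)>\varepsilon_n$ lying in the ball around $P_{\theta_k}$, bounds the type-II error by $\exp\{-nd^2(P_{\theta_k},P_0)/2\}\le\exp\{-(18/19)^2nd^2(P_\theta,P_0)/2\}$ using $d(P_0,P_{\theta_k})\ge d(P_0,P_\theta)-\varepsilon_n/19\ge(18/19)\,d(P_0,P_\theta)$.

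Two concrete problems. First, your route to $C_2>1/3$ fails as written: passing through $e^{-c_0nj^2\varepsilon_n^2}$ and then converting to $e^{-C_2nd^2(P_0,P_\theta)}$ costs a factor $j^2/(j+1)^2\ge 1/4$, so you only get $C_2=c_0/4$, and with the standard local-test constant $c_0\le 1/2$ this sits well below $1/3$. Moreover, enlarging $M$ is irrelevant to $C_2$ --- a larger $M$ only makes the entropy negligible relative to $n\varepsilon_n^2$, i.e.\ it buys $C_1$, not the type-II constant, which is fixed by the local-test exponent and the covering geometry. The repair is the paper's one-line argument above, yielding $C_2=(18/19)^2/2>1/3$; the $1/3$ threshold matters because Lemma 4.2 later needs $C_2-1/3>0$. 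Second, your dyadic magnitude-peeling/clipping for unbounded weights does not close: the union over magnitude shells $k$ is infinite while the available exponent is fixed at $n\varepsilon_n^2$ per Hellinger shell, and a large $\|\theta\|_\infty$ does not force a large $d(P_\theta,P_0)$ (cancellations and dead ReLUs allow huge weights with $f_\theta\equiv f_0$), so nothing terminates the sum over $k$; clipping $f_\theta$ also takes you out of the network class whose entropy you computed. To be fair, the paper itself does not resolve this --- it applies the Schmidt--Hieber covering bound, which presupposes bounded parameters, to the nominally unbounded class --- so you have correctly identified a genuine subtlety, but your sketch is not yet a proof of the unbounded case.
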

\begin{proof}
Due to the well-known result (e.g., \cite{le2012asymptotic}, page 491 or \cite{ghosal2007convergence}, Lemma 2), there always exists a function $\psi\in[0,1]$, such that
\[
\begin{split}
&\mathbb{E}_{P_0}(\psi)\leq \exp\{-nd^2(P_{\theta_1},P_{0}) /2\},\\
&\mathbb{E}_{P_{\theta}}(1-\psi)\leq \exp\{-nd^2(P_{\theta_1},P_{0}) /2\},
\end{split}
\]
for all $P_\theta\in\mathcal{F}(L,\boldsymbol{p},s)$ satisfying that $d(P_{\theta},P_{\theta_1})\leq d(P_{0},P_{\theta_1})/18$. 

Let $K=N(\varepsilon_n/19,\mathcal{F}(L,\boldsymbol{p},s),d(\cdot,\cdot) )$ denote the covering number of set $\mathcal{F}(L,\boldsymbol{p},s)$, i.e., there exists $K$ Hellinger-balls with radius $\varepsilon_n/19$, that completely cover $\mathcal{F}(L,\boldsymbol{p},s)$.
For any $\theta\in\mathcal{F}(L,\boldsymbol{p},s)$ (W.O.L.G, we assume $P_\theta$ belongs to the $k$th Hellinger ball centered at $P_{\theta_k}$), if $d(P_\theta, P_0)>\varepsilon_n$,
then  we must have that  $d(P_0,P_{\theta_k})>(18/19)\varepsilon_n$ and there exists a testing function $\psi_k$, such that
\[
\begin{split}
\mathbb{E}_{P_0}(\psi_k)&\leq \exp\{-nd^2(P_{\theta_k},P_{0}) /2\}\\
&\leq \exp\{-(18^2/19^2/2)n\varepsilon_n^2\},\\
\mathbb{E}_{P_{\theta}}(1-\psi_k)&\leq \exp\{-nd^2(P_{\theta_k},P_{0}) /2\}\\
&\leq \exp\{-n(d(P_0,P_{\theta})-\varepsilon_n/19)^2  /2\}\\
&\leq \exp\{-(18^2/19^2/2)nd^2(P_0,P_{\theta})\}.
\end{split}
\]
Now we define $\phi=\max_{k=1,\dots,K}\psi_k$.
Thus we must have
\[
\begin{split}
\mathbb{E}_{P_0}(\phi)&\leq\sum_k \mathbb{E}_{P_0}(\psi_k)\leq K \exp\{-(18^2/19^2/2)n\varepsilon_n^2\}\\
&\leq \exp\{-((18^2/19^2/2)n\varepsilon_n^2-\log K)\}.
\end{split}
\]
Note that
\begin{align}
&\log K=\log N(\varepsilon_n/19,\mathcal{F}(L,\boldsymbol{p},s),d(\cdot,\cdot) )\nonumber\\
&\leq \log N(\sqrt{8}\sigma_{\varepsilon}\varepsilon_n/19,\mathcal{F}(L,\boldsymbol{p},s),\|\cdot\|_\infty )\nonumber\\
&\leq (s+1)\log(\frac{38}{\sqrt{8}\sigma_{\varepsilon}\varepsilon_n}L(12pN+1)^{2L})\nonumber\\
&\leq s\log \frac{1}{\varepsilon_n} + s\log(nL/s) + sL\log(pN)\nonumber\\
&\leq n\varepsilon_n^2/4,\label{testrate}
\quad \mbox{ for sufficiently large n} ,
\end{align}
where the first inequality is due to the fact
\begin{equation*}\begin{split}
d^2(P_{\theta}, P_0) 
\leq 1- \exp\{-\frac{1}{8\sigma^2_{\epsilon}}\|f_0 - f_{\theta}\|^2_\infty\}
\end{split}
\end{equation*}
and $\varepsilon_n=o(1)$,
the second inequality is due to Lemma 10 of \cite{Schmidt-Hieber2017Nonparametric}.
Therefore,
\[
\begin{split}
\mathbb{E}_{P_0}(\phi)&\leq\sum_k P_0(\psi_k)\leq  \exp\{-C_1n\varepsilon_n^2\},
\end{split}
\]
for some $C_1=18^2/19^2/2-1/4$. On the other hand, for any $\theta$, such that $d(P_\theta,P_0)\geq \varepsilon_n$, say $P_\theta$ belongs to the $k$th Hellinger ball, then we have
\[
\begin{split}
\mathbb{E}_{P_{\theta}}(1-\phi)&\leq \mathbb{E}_{P_{\theta}}(1-\psi_k)\leq\exp \{-C_2nd^2(P_0,P_{\theta})\},
\end{split}
\]
where $C_2=18^2/19^2/2$. Hence we conclude the proof.
\end{proof}

{\noindent\bf Proof of Lemma 4.1}
\begin{proof}
It suffices to construct some $q^*(\theta)\in \mathcal Q$, such that w.h.p,
\begin{align}
&\mbox{KL}(q^*(\theta)\|\pi(\theta)) + \int l_n(P_0, P_{\theta})q^*(\theta)d\theta \nonumber\\
\leq &nr_n+\frac{3n}{2\sigma_{\varepsilon}^2}\inf_{\theta}\|f_{\theta}-f_0\|^2_{\infty}+\frac{3nr_n}{2\sigma^2_{\epsilon}}.\label{eq:lemma4.1}
\end{align}
Let $\theta^{\ast}=\arg\min_{\theta \in \Theta_B(L,\boldsymbol{p},s)} \|f_{\theta}-f_0\|^2_2$ and we choose the same $q^*(\theta)$ that has been used in the proof of Theorem 2 of \cite{Cherief2019Convergence}. Specifically,  for all $h=1,\ldots, H$, $\gamma^{\ast}_{h}=\mathbb{I}(\theta^{\ast}_h \neq 0)$, and\\
i) For uniform slab distribution,
\begin{equation}\label{qstar}
\begin{split}
&\theta_h \sim \gamma^{\ast}_h\mathcal{U}([\theta^{\ast}_h-a_n, \theta^{\ast}_h+a_n]) + (1-\gamma^{\ast}_h)\delta_{0},
\end{split}
\end{equation}
where $a_n=\frac{s}{4n}(12BpN)^{-2L}\{(p+1+\frac{1}{12BpN-1})^2 \frac{L^2}{(12BpN)^2}+\frac{1}{(12BpN)^2-1}+\frac{2}{(12BpN-1)^2}\}^{-1}$. \\
ii) For Gaussian slab distribution,
\begin{equation}\label{qstar2}
\begin{split}
&\theta_h \sim \gamma^{\ast}_h\mathcal{N}(\theta^*_h, \sigma^2_n) + (1-\gamma^{\ast}_h)\delta_{0},
\end{split}
\end{equation}
where $\sigma^2_n=\frac{s}{16n}\log(36pN)^{-1}(24BpN)^{-2L}\\
\{(p+1+\frac{1}{12BpN-1})^2+\frac{1}{(24BpN)^2-1}+\frac{2}{(24BpN-1)^2}\}^{-1}$.

According to the proof of Theorem 2 in \cite{Cherief2019Convergence},

\begin{align}
&\mbox{KL}(q^*(\theta)\|\pi(\theta)) \leq nr_n, \label{eqche2}\\
&\int \|f_{\theta} - f_{\theta^{\ast}}\|_{\infty}^2q^*(\theta)d\theta \leq r_n, \label{eqche1}
\end{align}
and the first term on L.H.S of (\ref{eq:lemma4.1}) is bounded.

To upper bound the second term on L.H.S of (\ref{eq:lemma4.1}), note that
\[
\begin{split}
l_n(P_0, P_{\theta}) = &\frac{1}{2\sigma^2_{\epsilon}}(\|\boldsymbol{Y}^{(n)} - f_{\theta}(\boldsymbol{X}^{(n)})\|^2_2 \\
&- \|\boldsymbol{Y}^{(n)} - f_{0}(\boldsymbol{X}^{(n)})\|^2_2)\\
= &\frac{1}{2\sigma^2_{\epsilon}} (\|\boldsymbol{Y}^{(n)} - f_{0}(\boldsymbol{X}^{(n)})+ f_0(\boldsymbol{X}^{(n)})\\
&-f_{\theta}(\boldsymbol{X}^{(n)}))\|^2_2 - \|\boldsymbol{Y}^{(n)} - f_{0}(\boldsymbol{X}^{(n)})\|^2_2)\\
= &\frac{1}{2\sigma^2_{\epsilon}}(\|f_{\theta}(\boldsymbol{X}^{(n)})-f_0(\boldsymbol{X}^{(n)})\|^2_2 \\
&+ 2\langle \boldsymbol{Y}^{(n)}-f_0(\boldsymbol{X}^{(n)}), f_0(\boldsymbol{X}^{(n)}) - f_\theta(\boldsymbol{X}^{(n)})\rangle).
\end{split}
\]
Denote
\[
\begin{split}
\mathcal{R}_1 &= \int \|f_{\theta}(\boldsymbol{X}^{(n)}) - f_0(\boldsymbol{X}^{(n)})\|^2_2 q^*(\theta)d\theta, \\
\mathcal{R}_2 &=\int \langle \boldsymbol{Y^{(n)}}-f_0(\boldsymbol{X}^{(n)}), f_0(\boldsymbol{X}^{(n)}) - f_\theta(\boldsymbol{X}^{(n)})\rangle q^*(\theta)d\theta.
\end{split}
\]
Since $\|f_{\theta}(\boldsymbol{X}^{(n)}) - f_0(\boldsymbol{X}^{(n)})\|^2_2 \leq n \|f_{\theta} - f_0\|^2_{\infty}\leq n\|f_{\theta}-f_{\theta^{\ast}}\|^2_{
\infty}+n\|f_{\theta^{\ast}}-f_0\|^2_{\infty}$,
\[
\mathcal{R}_1 \leq  nr_n + n\|f_{\theta^{\ast}}-f_0\|^2_{\infty} .
\]
Noting that $\boldsymbol{Y}^{(n)} - f_0(\boldsymbol{X}^{(n)}) = \epsilon \sim \mathcal{N}(0, \sigma^2_{\epsilon}I)$, then
\[
\begin{split}
\mathcal{R}_2 &= \int \epsilon^T(f_0(\boldsymbol{X}^{(n)}) - f_\theta(\boldsymbol{X}^{(n)}))q^*(\theta)d\theta\\
&= \epsilon^T\int (f_0(\boldsymbol{X}^{(n)}) - f_\theta(\boldsymbol{X}^{(n)}))q^*(\theta)d\theta\\
&\sim \mathcal{N}(0, c_f\sigma^2_{\epsilon}),
\end{split}
\]
where $c_f = \|\int (f_0(\boldsymbol{X}^{(n)}) - f_\theta(\boldsymbol{X}^{(n)}))q^*(\theta)d\theta\|^2_2 \leq \mathcal{R}_1$ due to Cauchy-Schwarz inequality. Then by Gaussian tail bound
\[
P_0(\mathcal{R}_2 \geq \mathcal{R}_1) \leq \exp(-\frac{\mathcal{R}^2_1}{2\sigma^2_{\epsilon}\mathcal{R}_1}),
\]
which implies $\mathcal{R}_2 \leq \mathcal{R}_1$ w.h.p..
Therefore,
\[
\begin{split}
\int l_n(P_0, P_{\theta})q^*(\theta)d\theta&= \mathcal{R}_1/2\sigma^2_{\epsilon} + \mathcal{R}_2/\sigma^2_{\epsilon}\\
&\leq 3n (r_n + \|f_{\theta^{\ast}}-f_0\|^2_{\infty})/2\sigma_{\varepsilon}^2  \mbox{,  w.h.p.}, 
\end{split}
\]
which concludes this lemma together with (\ref{eqche2}).

\end{proof}

\subsection{Proof of Lemma 4.2}
The proof is adapted from the proof of Theorem 3.1 in \cite{Pati2018on}.
\begin{proof}
We claim that with high probability (w.h.p), 
\begin{equation}\label{eq1}
M = \int_{\Theta} \eta(P_{\theta}, P_0)\pi(\theta)d\theta \leq e^{Cn\varepsilon_n^2}
\end{equation} 
for some $C>0$, where $\log\eta(P_{\theta}, P_0)=l_n(P_{\theta}, P_0)+\frac{n}{3}d^2(P_{\theta}, P_0)$. Thus by Lemma $\ref{lmdonsker}$, w.h.p.,
\[
\begin{split}
&\frac{n}{3}\int d^2(P_{\theta}, P_0)\widehat q(\theta)d\theta \\
\leq  & Cn\varepsilon_n^2 + \mbox{KL}(\widehat q(\theta)\|\pi(\theta)) - \int l_n(P_{\theta}, P_0)\widehat q(\theta)d\theta\\
\leq  & Cn\varepsilon_n^2 + \mbox{KL}( q(\theta)\|\pi(\theta)) - \int l_n(P_{\theta}, P_0) q(\theta)d\theta
\end{split}
\]
holds for any distribution $q_\theta$. The last inequality holds since
that $\mbox{KL}(q(\theta)\|\pi(\theta)) - \int l_n(P_{\theta}, P_0)q(\theta)d\theta$ is the negative ELBO function up to a constant, which is minimized at $\widehat q(\theta)$. This concludes Lemma 4.3.

To prove ($\ref{eq1}$), we define
\begin{align*}
& M_1 = \int_{d(P_{\theta}, P_0)\leq \varepsilon_n}\eta(P_{\theta}, P_0)\pi(\theta)d\theta,\\
& M_2 = \int_{d(P_{\theta}, P_0)> \varepsilon_n}\eta(P_{\theta}, P_0)\pi(\theta)d\theta,
\end{align*}
and will bound both $M_1$ and $M_2$.

For $M_1$, by Fubini's theorem,
\begin{equation*}{\label{eqt1}}
\begin{split}
\mathbb{E}_{P_0}M_1=&\int_{d(P_{\theta}, P_0)\leq \varepsilon_n}\int \frac{p_{\theta}(\boldsymbol{D}^{(n)})}{p_0(\boldsymbol{D}^{(n)})}e^{\frac{n}{3}d^2(P_{\theta}, P_0)}\\
&dP_0(\boldsymbol{D}^{(n)})\pi(\theta)d\theta\\
=&\int_{d(P_{\theta}, P_0)\leq \varepsilon_n}e^{\frac{n}{3}d^2(P_{\theta}, P_0)}\pi(\theta)d\theta\\
\leq &e^{\frac{n}{3}\varepsilon_n^2}.
\end{split}
\end{equation*}
It follows from Markov inequality that $M_1 \leq e^{Cn\varepsilon_n^2}$ w.h.p..

For $M_2$, we further decompose it as $M_2 = M_{21} + M_{22}$, 
\begin{align*}
& M_{21}=\int_{d(P_{\theta}, P_0)> \varepsilon_n}\phi\eta(P_{\theta}, P_0)\pi(\theta)d\theta, \\
& M_{22}=\int_{d(P_{\theta}, P_0)> \varepsilon_n}(1-\phi)\eta(P_{\theta}, P_0)\pi(\theta)d\theta,
\end{align*}
where the testing function $\phi$ is defined in Lemma $\ref{lmtesting}$.

For $M_{21}$, since $\mathbb{E}_{P_0}[\phi]\leq e^{-C_1n\varepsilon_n^2}$,  $\phi \leq e^{-C'_1n\varepsilon_n^2}$ for some $C'_1>0$ w.h.p., thus $M_{21} \leq e^{-C'_1n\varepsilon_n^2}M_2$ w.h.p.

For $M_{22}$, by Fubini's theorem and Lemma $\ref{lmtesting}$,
\begin{align*}
\mathbb{E}_{P_{0}}M_{22}&=\int_{d(P_{\theta}, P_0)> \varepsilon_n}\mathbb{E}_{P_{\theta}}(1-\phi_n)e^{\frac{n}{3}d^2(P_{\theta}, P_0)}\pi(\theta)d\theta\\
&\leq e^{-(C_2-1/3)n\varepsilon_n^2}:=e^{-C_2'n\varepsilon_n^2}.
\end{align*}
Thus, $M_2 \leq e^{-C'_1n\varepsilon_n^2}M_2 + e^{-C'_2n\varepsilon_n^2}$ w.h.p., which implies that $M_2 \leq e^{-C^{''}_2n\varepsilon_n^2}$ w.h.p. for some $C^{''}_2>0$.

Combine the boundedness results for both $M_1$ and $M_2$, we conclude ($\ref{eq1}$).

\end{proof}

\subsection{Proof of Theorem 5.1}
The following Lemmas \ref{truncated} and \ref{prior3} consider the situation that the network width $N$ and $s$ are not specified. These two lemmas prepares our proof for Theorem 5.1.

\begin{lem}\label{truncated}
Let $N_n =c_N [Ls^*\log N^*+s^*\log(Ln/s^*)] \\ \log^{2\delta}(n)\asymp n\varepsilon_n^{*2}$ and $s_n\lambda_s= c_s [Ls^*\log N^*+s^*\log(Ln/s^*)] \\ \log^{2\delta}(n)\asymp n\varepsilon_n^{*2}$ for some constant $c_N$ and $c_s$ ($N^*$, $s^*$ and $\varepsilon^*_n$ are defined in Section 5).  If the neural network width $N$ and sparsity $s$ follow some truncated priors with support $\{1,\dots,N_n\}$ and $\{0,\dots,s_n\}$ respectively, and this prior satisfies $-\log \pi(N=N^*,s=s^*)=O(n\varepsilon_n^2)$. Then similar results of Lemma 4.1 and Lemma 4.2 holds, that is for some $C>0$ and $C'>0$, we have
 \begin{equation}\label{re}
\begin{split}
&\int d^2(P_{\theta}, P_0)\widehat{q}(\theta)d\theta \leq C\varepsilon_n^{*2} + \frac{3}{n}\inf_{q(\theta) \in \mathcal Q}\Bigl\{ \mbox{KL}(q(\theta)\|\pi(\theta))\\
&\qquad\qquad\qquad\qquad\qquad\quad + \int l_n(P_0, P_{\theta})q(\theta)d\theta \Bigr\}, \mbox{ and}\\
&\inf_{q(\theta) \in \mathcal Q}\Bigl\{ \mbox{KL}(q(\theta)\|\pi(\theta))
+ \int l_n(P_0, P_{\theta})q(\theta)d\theta \Bigr\} \\
&\qquad\qquad\qquad\qquad\qquad\qquad\qquad \leq C'n (\varepsilon_n^{*2}+r_n^*
+ \xi_n^* )
\end{split}
\end{equation}
hold with dominating probability.
\end{lem}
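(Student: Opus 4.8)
The plan is to mirror the proofs of Lemma 4.1 and Lemma 4.2, but carry the network-structure parameters $(N,s)$ inside the parameter $\theta$ and handle the extra prior mass on $(N,s)$ as an additive penalty. First I would address the second inequality in (\ref{re}), which is the analogue of Lemma 4.1. The construction is identical: take the optimal structure $(N^*,s^*)$ and inside $\mathcal F(L,\boldsymbol p^L_{N^*},s^*)$ pick $\theta^*=\arg\min_{\theta\in\Theta_B}\|f_\theta-f_0\|_\infty^2$, then choose $q^*(\theta,N,s)=\delta_{N^*}\otimes\delta_{s^*}\otimes q^*(\theta\mid N^*,s^*)$ with the same uniform/Gaussian spike-and-slab $q^*(\theta\mid N^*,s^*)$ as in (\ref{qstar})/(\ref{qstar2}). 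Because $q^*$ puts all its $(N,s)$-mass on the single point $(N^*,s^*)$, the KL decomposes as
\[
\mbox{KL}(q^*\|\pi)=-\log\pi(N^*,s^*)+\mbox{KL}\bigl(q^*(\theta\mid N^*,s^*)\,\big\|\,\pi(\theta\mid N^*,s^*)\bigr),
\]
and the second term is $\leq n r_n^*$ exactly as in (\ref{eqche2}), while the first term is $O(n\varepsilon_n^{*2})$ by the hypothesis $-\log\pi(N=N^*,s=s^*)=O(n\varepsilon_n^2)$. The likelihood-integral term $\int l_n(P_0,P_\theta)q^*d\theta$ is bounded by $3n(r_n^*+\xi_n^*)/(2\sigma_\varepsilon^2)$ verbatim from the Lemma 4.1 argument (the $\mathcal R_1,\mathcal R_2$ decomposition and the Gaussian-tail bound on $\mathcal R_2$ go through unchanged since $\theta^*\in\Theta_B(L,\boldsymbol p^L_{N^*},s^*)$). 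Adding the three pieces gives the stated $C'n(\varepsilon_n^{*2}+r_n^*+\xi_n^*)$ bound w.h.p.

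Next I would prove the first inequality in (\ref{re}), the analogue of Lemma 4.2. The only place the non-adaptive proof uses the structure is the testing argument (Lemma \ref{lmtesting}) and the covering-number bound (\ref{testrate}). With truncated priors on $\{1,\dots,N_n\}$ and $\{0,\dots,s_n\}$, the relevant model space is the finite union $\bigcup_{N\leq N_n}\bigcup_{s\leq s_n}\mathcal F(L,\boldsymbol p^L_N,s)$, and a union bound over at most $N_n s_n$ classes only inflates $\log K$ by $\log(N_n s_n)=O(\log n)$, which is absorbed into $n\varepsilon_n^{*2}$ given the choices $N_n\asymp s_n\lambda_s\asymp n\varepsilon_n^{*2}$ and $\lambda_s\geq aL\log n$. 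Hence there is a test $\phi$ with $\mathbb E_{P_0}(\phi)\leq e^{-C_1 n\varepsilon_n^{*2}}$ and $\sup_{d(P_\theta,P_0)>\varepsilon_n^*}\mathbb E_{P_\theta}(1-\phi)\leq e^{-C_2 n d^2(P_\theta,P_0)}$ with $C_2>1/3$. Then the Donsker–Varadhan argument of Lemma 4.2 applies with $\pi(\theta)$ replaced by the joint prior $\pi(\theta,N,s)$: one shows $M=\int\eta(P_\theta,P_0)\pi(\theta,N,s)\,d(\theta,N,s)\leq e^{Cn\varepsilon_n^{*2}}$ w.h.p. by the same split $M=M_1+M_{21}+M_{22}$ (the $M_1$ bound uses Fubini and $d\leq\varepsilon_n^*$; $M_{21}$ uses $\mathbb E_{P_0}\phi$ small; $M_{22}$ uses the type-II error bound with $C_2-1/3>0$), and then Lemma \ref{lmdonsker} with $\widehat q$ minimizing the negative ELBO yields the first inequality of (\ref{re}).

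The main obstacle I anticipate is making the testing/covering step uniform over the expanded (but truncated) model space while keeping $\log K=o(n\varepsilon_n^{*2})$: one must verify that $\log N(\varepsilon_n^*/19,\mathcal F(L,\boldsymbol p^L_{N_n},s_n),d)\lesssim s_n\log(1/\varepsilon_n^*)+s_n\log(nL/s_n)+L s_n\log(pN_n)$ via Lemma 10 of \cite{Schmidt-Hieber2017Nonparametric}, and then check that with $s_n\lambda_s\asymp n\varepsilon_n^{*2}$ and $\lambda_s\geq aL\log n$ this is indeed $o(n\varepsilon_n^{*2})$ — the truncation levels $N_n,s_n$ are precisely calibrated so that the log-covering number of the largest included network is of order $n\varepsilon_n^{*2}/\lambda_s\cdot\log(\cdot)\ll n\varepsilon_n^{*2}$. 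Everything else is a routine transcription of the two non-adaptive proofs with $(N,s)$ folded into the parameter and the extra $-\log\pi(N^*,s^*)$ term tracked explicitly.
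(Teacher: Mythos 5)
Your proposal matches the paper's own proof essentially step for step: the second inequality is obtained by taking $q^*(N)=\delta_{N^*}$, $q^*(s)=\delta_{s^*}$ with the same $q^*(\theta|N^*,s^*)$ as in the non-adaptive case and absorbing the extra $-\log\pi(N^*,s^*)=O(n\varepsilon_n^{*2})$ term, while the first inequality is obtained by re-running the covering-number/testing argument over the truncated union $\bigcup_{N\le N_n}\bigcup_{s\le s_n}\mathcal F(L,\boldsymbol p_N^L,s)$, exactly as the paper does. The approach and all key steps are correct and coincide with the paper's.
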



\begin{proof}
To prove the first result of (\ref{re}), similarly to the proof of Lemma 4.2, it is essential to show that there exists some testing function that achieves exponentially small error probability. This further requires a bounded covering number of $N(\varepsilon_n^*/19,\bigcup_{N=1}^{N_n}\bigcup_{s=0}^{s_n}\mathcal{F}(L,\boldsymbol{p}_N^{L},s),d(\cdot,\cdot))$.
Similar to (\ref{testrate}), we have that
\[
\begin{split}
&N(\varepsilon_n^*/19,\bigcup_{N=1}^{N_n}\bigcup_{s=0}^{s_n}\mathcal{F}(L,\boldsymbol{p}_N^{L},s),d(\cdot,\cdot))\\
\leq&\log N(\sqrt{8}\sigma_{\epsilon}\varepsilon_n^*/19,\bigcup_{N=1}^{N_n}\bigcup_{s=0}^{s_n}\mathcal{F}(L,\boldsymbol{p}_N^{L},s),\|\cdot\|_\infty )\\
\leq &\log (s_n)+\log (N_n) +\\ &(s_n+1)\log(\frac{38}{\sqrt{8}\sigma_{\epsilon}\varepsilon_n^*}L(12pN_n+1)^{2L})\\
\leq & n\varepsilon_n^{*2}/4,\quad\mbox{given a large n} ,
\end{split}
\]
where the last inequality holds due to the fact that $\log(N_n)\asymp \log n$, $s_n\log (1/\varepsilon_n^*)\asymp s_n\log n$ and 
$\lambda_s\geq aL\log n$ for some $a>0$.
Therefore, by the argument of Lemma \ref{lmtesting}, there still exists a testing function that separate $P_0$ and $\{P_\theta: d(P_0,P_\theta)\geq \varepsilon_n, P_\theta\in \bigcup_{N=1}^{N_n}\bigcup_{s=0}^{s_n}\mathcal{F}(L^*,\boldsymbol{p}_N^{L^*},s)\}$ with exponentially small error probability. By the argument used in the proof of Lemma 4.2, implies that first result of (\ref{re}) holds. 

The proof of the second result of (\ref{re}) follows the same argument used in Lemma 4.1. We can choose the $ q^*(\theta, N, s)\in\mathcal{Q}_{N,s}$ as $q^*(N)=\delta_{N^*}$, $q^*(s)=\delta_{s^*}$, and 
$q^*(\theta|N^*,s^*)=q^*(\theta)$ as defined in (\ref{qstar}). Trivially,
(\ref{eqche1}) still holds, and $\mbox{KL}(q^*(\theta, N, s)\|\pi(\theta, N, s))\leq nr_n^*-\log \pi(N=N^*,s=s^*)=O(n\varepsilon_n^{*2}+nr_n^*)$. It hence concludes the result.
\end{proof}

The next Lemma is an improved result of Corollary 6.1 in \cite{Polson2018posterior}.
\begin{lem}\label{prior3}
Under prior specification (13),
\begin{equation*}
\pi(N\geq N_n\mbox{ or }s\geq s_n|\boldsymbol{D}^{(n)})\leq \exp\{-c_0n\varepsilon_n^{*2}\}, 
\end{equation*}
where constant $c_0$ increases to infinity as $c_s$ (defined in Lemma \ref{truncated}) increases.
\end{lem}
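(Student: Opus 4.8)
The plan is to write $\pi(N\ge N_n\text{ or }s\ge s_n\mid \boldsymbol{D}^{(n)})$ as a ratio of two integrals against the joint prior (\ref{prior2}), bound the numerator from above in $P_0$-expectation by the prior probability of the event (Fubini), bound the denominator (the marginal likelihood ratio) from below by the Donsker--Varadhan inequality (Lemma~\ref{lmdonsker}) with the very plug-in distribution $q^*$ already used for Lemma~\ref{truncated}, and combine. Throughout one uses $n\varepsilon_n^{*2}=M'^2[Ls^*\log N^*+s^*\log(Ln/s^*)]\log^{2\delta}n\to\infty$ (Condition~\ref{cond1}) together with $N_n\asymp n\varepsilon_n^{*2}$ and $\lambda_s s_n\asymp n\varepsilon_n^{*2}$ from the definitions in Lemma~\ref{truncated}. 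Concretely, let $\mathcal N_n=\int_{A_n}[p_\theta(\boldsymbol{D}^{(n)})/p_0(\boldsymbol{D}^{(n)})]\,\pi(d\theta)$ and $\mathcal D_n=\int [p_\theta(\boldsymbol{D}^{(n)})/p_0(\boldsymbol{D}^{(n)})]\,\pi(d\theta)$, where $A_n=\{N\ge N_n\text{ or }s\ge s_n\}$ and $\pi$ now denotes the full prior over $(\theta,N,s)$, so that the quantity of interest equals $\mathcal N_n/\mathcal D_n$.

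For the numerator, since $\mathbb E_{P_0}[p_\theta(\boldsymbol{D}^{(n)})/p_0(\boldsymbol{D}^{(n)})]=1$ for each fixed $\theta$, Fubini's theorem gives $\mathbb E_{P_0}\mathcal N_n=\pi(A_n)\le\pi(N\ge N_n)+\pi(s\ge s_n)$. The $s$-prior is geometric, so $\pi(s\ge s_n)=e^{-\lambda_s s_n}$; the $N$-prior is a zero-truncated Poisson, so for $N_n>2\lambda$
\[
\pi(N\ge N_n)\le C\frac{\lambda^{N_n}}{N_n!}\le C\Bigl(\frac{e\lambda}{N_n}\Bigr)^{N_n}=C\exp\{-N_n\log(N_n/(e\lambda))\}\le e^{-\lambda_s s_n}
\]
for $n$ large, because $N_n\asymp n\varepsilon_n^{*2}\to\infty$ forces $\log(N_n/(e\lambda))\to\infty$, so $N_n\log(N_n/(e\lambda))$ eventually exceeds any fixed multiple of $n\varepsilon_n^{*2}$, in particular $\lambda_s s_n$. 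Hence $\mathbb E_{P_0}\mathcal N_n\le 2e^{-\lambda_s s_n}$, and Markov's inequality yields $\mathcal N_n\le e^{-\lambda_s s_n/2}$ with $P_0$-probability at least $1-2e^{-\lambda_s s_n/2}\to 1$.

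For the denominator, Lemma~\ref{lmdonsker} gives $-\log\mathcal D_n=\inf_q\{\mbox{KL}(q\|\pi)+\int l_n(P_0,P_\theta)\,q(d\theta)\}$, the infimum over all distributions on the expanded parameter space. Taking $q=q^*$ exactly as in the proof of Lemma~\ref{truncated} (degenerate at the optimal structure $(N^*,s^*)$ and given by (\ref{qstar})/(\ref{qstar2}) on $\theta$ around the best approximant $\theta^*$) decomposes $\mbox{KL}(q^*\|\pi)=-\log\pi(N^*)-\log\pi(s^*)+\mbox{KL}(q^*(\theta\mid N^*,s^*)\|\pi(\theta\mid N^*,s^*))$. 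Here $-\log\pi(s^*)\le\lambda_s s^*+O(1)=o(n\varepsilon_n^{*2})$ by the calibration $\lambda_s\prec n\varepsilon_n^{*2}/s^*$; $-\log\pi(N^*)\le N^*\log N^*+O(N^*)+O(1)=O(n\varepsilon_n^{*2})$, since Condition~\ref{cond3} gives $s^*\ge 12pN^*$ and hence $n\varepsilon_n^{*2}\gtrsim Ls^*\log N^*\gtrsim N^*\log N^*$; and $\mbox{KL}(q^*(\theta\mid N^*,s^*)\|\pi(\theta\mid N^*,s^*))\le nr_n^*$ by (\ref{eqche2}). Moreover $\int l_n(P_0,P_\theta)\,q^*(d\theta)\le\frac{3n}{2\sigma_\epsilon^2}(r_n^*+\xi_n^*)$ with dominating probability by the Gaussian-tail argument in the proof of Lemma~\ref{thmbound2}. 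Using $r_n^*\asymp\xi_n^*$ (Condition~\ref{cond2}) these add up to $-\log\mathcal D_n\le C'n\varepsilon_n^{*2}$, i.e. $\mathcal D_n\ge e^{-C'n\varepsilon_n^{*2}}$, with dominating probability.

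Intersecting the two high-probability events gives $\pi(N\ge N_n\text{ or }s\ge s_n\mid\boldsymbol{D}^{(n)})=\mathcal N_n/\mathcal D_n\le e^{-\lambda_s s_n/2+C'n\varepsilon_n^{*2}}=e^{-c_0 n\varepsilon_n^{*2}}$ with $c_0=\lambda_s s_n/(2n\varepsilon_n^{*2})-C'$ (up to $o(1)$); since $\lambda_s s_n\asymp c_s\,n\varepsilon_n^{*2}/M'^2$, this $c_0$ grows with $c_s$, as claimed. I expect the main obstacle to be the denominator bound, and within it the verification that the prior mass $\pi(N^*)\pi(s^*)$ of the optimal architecture is no smaller than $e^{-O(n\varepsilon_n^{*2})}$ — this is exactly where Conditions~\ref{cond1} and \ref{cond3} and the design range $aL\log n\le\lambda_s\prec n\varepsilon_n^{*2}/s^*$ of the prior hyperparameters enter; the numerator estimate and the Poisson/geometric tail bounds are routine.
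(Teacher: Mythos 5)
Your proof is correct, and its overall skeleton matches the paper's: write the posterior probability of $\{N\geq N_n \mbox{ or } s\geq s_n\}$ as a ratio, control the numerator by the prior mass of that set via Fubini and Markov (the paper packages this step as Lemma A.4 of \cite{song2017nearly}), and lower-bound the denominator $m(\boldsymbol{D}^{(n)})/p_0(\boldsymbol{D}^{(n)})$ by $e^{-O(n\varepsilon_n^{*2})}$ with high probability; your Poisson and geometric tail bounds for $\pi(N\geq N_n)$ and $\pi(s\geq s_n)$ are the same as the paper's. Where you genuinely diverge is the denominator: the paper gives a classical small-ball argument, constructing $\mathcal F_s=\{f_\theta\in\mathcal F(L,\boldsymbol{p}^*,s^*):\|f_\theta-f_0\|_\infty\leq\varepsilon_n^*\}$, lower-bounding its prior mass through the approximation theorem, Lemma 10 of \cite{Schmidt-Hieber2017Nonparametric} and a count of sparsity patterns, and then lower-bounding the likelihood ratio uniformly on $\mathcal F_s$ by a Gaussian concentration bound. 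You instead apply Donsker--Varadhan (Lemma \ref{lmdonsker}) to $\log m(\boldsymbol{D}^{(n)})/p_0(\boldsymbol{D}^{(n)})$ and plug in the same $q^*$ used for Lemma \ref{truncated}, so the bound reduces to $-\log\pi(N^*,s^*)+nr_n^*+\int l_n\,q^*=O(n\varepsilon_n^{*2})$, all of which are already established in the proofs of Lemma \ref{thmbound2} and Theorem \ref{adapt}. Your route is more economical since it recycles the ELBO computations rather than redoing a prior-mass calculation from approximation theory; the paper's route is more self-contained and standard in the Bayesian contraction literature, and makes explicit where Conditions \ref{cond2} and \ref{cond3} and the calibration of $\lambda_s$ enter. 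Both yield $c_0$ of the form (a constant proportional to $c_s$) minus an absolute constant, hence growing with $c_s$ as claimed.
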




\begin{proof}
Due to Lemma A.4 in \cite{song2017nearly}, it suffice to show that 
\begin{align}
&\pi(N\geq N_n\mbox{ or }s\geq s_n)\leq \exp\{-c_1n\varepsilon_n^{*2}\}\label{smallprior}\\
&\log \frac{m(\boldsymbol{\boldsymbol{D}^{(n)}})}{p_0(\boldsymbol{\boldsymbol{D}^{(n)}})}\geq \exp\{-c_2n\varepsilon_n^{*2}\},\quad \mbox{w.h.p.}\label{marg}
\end{align}
where $c_1$ increases to infinity as  $c_s$ increases, $c_2>0$ is an absolute constant, $m(\boldsymbol{D}^{(n)})=\int p_{\theta}(\boldsymbol{D}^{(n)})d\pi(\theta)$ is the marginal density.

Inequality ($\ref{smallprior}$) is true, since
\[
\begin{split}
&-\log\pi(N> N_n) \asymp N_n\log N_n\succ n\varepsilon_n^{*2} \mbox{ and }\\
&-\log\pi(s> s_n)\geq C\lambda_s s_n 
\asymp n\varepsilon_n^{*2},
\end{split}
\]
hold for some constant $C$.

To prove (\ref{marg}), it is suffice to find a subset $\mathcal F_s\subset\mathcal F$, such that $\pi(\mathcal F_s)\geq \exp\{-c_3n\varepsilon_n^{*2}\}$ and w.h.p. $p_\theta(\boldsymbol{D}^{(n)})/p_0(\boldsymbol{D}^{(n)})\\ \geq\exp\{-c_4n\varepsilon_n^{*2}\}$ for any $p_\theta\in\mathcal F_s$. Such $\mathcal F_s$ can be defined as $\{f_{\theta} \in \mathcal{F}(L, \boldsymbol{p}^*=(12pN^*,\dots,12pN^*)',s^{\ast}):\|f_{\theta} - f_0\|_{\infty}\leq \varepsilon_n^*\}$,

First, we show that $p_{\theta}(\boldsymbol{D}^{(n)})/p_0(\boldsymbol{D}^{(n)})\geq \exp\{-c_4n\varepsilon_n^{*2}\}$ for any $p_{\theta} \in \mathcal{F}_s$. Note that
\[
\begin{split}
&-\log p_{\theta}(\boldsymbol{D}^{(n)})/p_0(\boldsymbol{D}^{(n)})\\
= &-\frac{1}{2\sigma^2_{\epsilon}}\sum^n_{i=1}[(Y_i - f_{0}(X_i))^2 - (Y_i - f_{\theta}(X_i))^2] \\
\leq &\frac{1}{2\sigma^2_{\epsilon}}[n\|f_{\theta} - f_0\|^2_{\infty} + 2|\langle \boldsymbol{Y}^{(n)}-f_0(\boldsymbol{X}^{(n)}),\\ 
&\qquad \qquad \qquad \qquad \qquad f_\theta(\boldsymbol{X}^{(n)})-f_0(\boldsymbol{X}^{(n)})\rangle |].
\end{split}
\]
Note that $\boldsymbol{Y}^{(n)}-f_0(\boldsymbol{X}^{(n)})$ is a vector of i.i.d. normal $N(0,\sigma_\epsilon^2)$, then by concentration inequality,
w.h.p, 
\[
|\langle \boldsymbol{X}^{(n)}-f_0(\boldsymbol{X}^{(n)}), f_\theta(\boldsymbol{X}^{(n)})-f_0(\boldsymbol{X}^{(n)})\rangle |\leq cn\varepsilon_n^{*2}
\]
for some $c>0$, and we can conclude that w.h.p.,
\[
\frac{p_{\theta}(\boldsymbol{D}^{(n)})}{p_0(\boldsymbol{D}^{(n)})}\geq \exp\{-c_4n\varepsilon_n^{*2}\}
\]

Second, we prove that $\pi(\mathcal{F}_s) \geq \exp \{-c_3n\varepsilon^{*2}_n\}$ in the following.
By condition 5.2, $\xi_n^*\asymp r_n^* = o(\varepsilon_n^{*2}) $, hence there must exists a NN $\widehat{f}_{\widehat{\theta}} \in \mathcal{F}(L, \boldsymbol{p}^* s^{\ast},\widehat{\gamma})$, where $\widehat{\gamma}$ denotes a specific pattern of nonzero links among $\widehat{\theta}$, s.t.
\[
\|\hat{f}_{\hat{\theta}} - f_0\|_\infty \lesssim \varepsilon_n^*/2.
\]

By triangle inequality, 
\[
\begin{split}
&\{f_{\theta} \in \mathcal{F}(L,\boldsymbol{p}^*,s^*):\|f_{\theta} - f_0\|_\infty\leq \varepsilon_n^*\}\\
\supset &\{f_\theta \in \mathcal{F}(L,\boldsymbol{p}^*,s^*,\widehat{\gamma}):\|f_{\theta} - \widehat{f}_{\widehat{\theta}}\|_{\infty}\leq \frac{\varepsilon_n^*}{2}\}.
\end{split}
\]
Furthermore, from the proof of Lemma 10 of \cite{Schmidt-Hieber2017Nonparametric}, we have
\[
\begin{split}
&\{f_{\theta}\ \in \mathcal{F}(L,\boldsymbol{p}^*,s^*,\widehat{\gamma}): \|f_{\theta} - \hat{f}_{\hat{\theta}}\|_{\infty} \leq \frac{\varepsilon_n^*}{2}\} \\
\supset &\{f_\theta: \|\theta\|_{\infty}\leq 1 \mbox{ and } \|\theta - \hat{\theta}\|_{\infty}\leq \frac{\varepsilon_n^*}{2VL} \},
\end{split}
\]
where $V=L(12pN^*+1)$.

Therefore,  
\[
\begin{split}
&\pi\{f_{\theta} \in \mathcal{F}(L,\boldsymbol{p}^*,s^*):\|f_{\theta} - f_0\|_\infty\leq \varepsilon_n^*\} \\
> &\frac{\pi\{f_\theta \in \mathcal{F}(L,\boldsymbol{p}^*,s^*,\widehat{\gamma}):\|f_{\theta} - \widehat{f}_{\widehat{\theta}}\|_{\infty}\leq \frac{\varepsilon_n^*}{2}\}}{{T \choose s^*}}\\
>& e^{-Ls^{\ast}\log(12pN^{\ast})}\pi\{\theta: \|\theta\|_{\infty} \leq 1 \mbox{ and } \\
&\|\theta - \hat{\theta}\|_{\infty} \leq \frac{\varepsilon_n^*}{2VL}\},
\end{split}
\]
where $T$ denotes the total number of edge in network $\mathcal{F}(L,\boldsymbol{p}^*,s^*)$.
Note that 
\[
\begin{split}
&\pi\{\theta: \|\theta\|_{\infty} \leq 1 \mbox{ and } \|\theta - \hat{\theta}\|_{\infty} \leq \frac{\varepsilon_n^*}{2VL})\}\\
\approx &\exp\{-s^{\ast}\log(\frac{2VL}{\varepsilon_n^*})\}.
\end{split}
\]

Therefore, it is sufficient to show that
\[
\begin{split}
&Ls^{\ast}\log(12pN^{\ast}) + s^{\ast}\log(\frac{2L^2(12pN^{\ast}+1)}{\varepsilon_n^*})\\
\leq &c_3n\varepsilon^{*2}_n,
\end{split}
\]
which hold trivially due to the definition of $\varepsilon_n^*$.

\end{proof}

{\noindent \bf Proof of Theorem 5.1}
\begin{proof}
Denote $\delta_{\widehat N}$ and $\delta_{\widehat s}$ be the degenerate VB posterior of $N$ and $s$. We claim that with dominating probability,
\begin{align}\label{complexity}
\widehat N<N_n \mbox{ and } \widehat s<s_n.
\end{align}
Therefore, it will be equivalent to consider the truncated prior 
$\widetilde \pi(N)\propto \pi(N)1(N<N_n)$ and $\widetilde\pi(s)\propto \pi(s)1(s<s_n)$.

Note that
\[
\begin{split}
&-\log \pi(N=N^*) \leq -\log \widetilde\pi(N=N^*)\\
\leq& \lambda + \log N^*! - N^*\log \lambda
\asymp N^*\log N^*\\
\leq &s^*\log N^* =O(n\varepsilon_n^{*2} ),
\end{split}
\]
and 
\[
\begin{split}
&-\log \pi(s=s^*) =O(\lambda_s s^*)
=O(n\varepsilon_n^{*2} ).
\end{split}
\]
Therefore, the conditions of Lemma \ref{truncated} hold and we conclude the proof.

Recall $q^*(\theta, N, s)\in\mathcal{Q}_{N,s}$ which is defined in the proof of Lemma \ref{truncated}, and we prove (\ref{complexity}) by showing that w.h.p.,
\begin{align}
\mbox{KL}(q^*(\theta, N, s)\|&\pi(\theta,N,s|\boldsymbol{D}^{(n)})) \nonumber\\
&\leq \mbox{KL}(q(\theta, N, s) \|\pi(\theta,N,s|\boldsymbol{D}^{(n)})),\label{KLL}
\end{align}
for any $q\in\mathcal{Q}_{N,s}$ whose marginal degenerate distribution of $N$ is large than $N_n$ or marginal degenerate distribution of $s$ is greater than $s_n$. Note that
\[
\begin{split}
   & \frac{1}{n}\mbox{KL}(q^*(\theta, N, s)\|\pi(\theta,N,s|\boldsymbol{D}^{(n)}))\\
   =&\frac{1}{n}\mbox{KL}(q^*(\theta, N, s)\|\pi(\theta, N, s))+\frac{1}{n}\mathbb{E}_{q^*}\log\frac{p_0(\boldsymbol{D}^{(n)})}{p_\theta(\boldsymbol{D}^{(n)})}\\
   + &\frac{1}{n}\log\frac{m(\boldsymbol{D}^{(n)})}{p_0(\boldsymbol{D}^{(n)})}.
\end{split}
\]
The sum of the first two terms in above equation, as shown in the proof of Lemma \ref{truncated}, is $O(\varepsilon_n^{*2}+r_n^*)=O(\varepsilon_n^{*2})$. 
For the third term, by LLN, it converges to constant $-\mbox{KL}(P_0\|m)\leq 0$.

Due to Lemma \ref{prior3}, $\mbox{KL}(q(\theta, N, s) \|\pi(\theta,N,s|\boldsymbol{D}^{(n)}))\geq \\ c_0n\varepsilon_n^{*2}$, and the constant $c_0$ increases to infinity as $c_s$ increases. Therefore, providing a sufficiently large $c_s$, (\ref{KLL}) holds.

\end{proof}

\subsection{Remarks for proofs of Corollaries  6.1-6.4.}
The proofs for Corollaries 6.1 and 6.3 are straightforward, and they are directly implied by Theorem 4.1.

For the proofs of Corollaries 6.2 and 6.4, we comment that Theorem 5.1 actually holds for any $(N^*,s^*)$ which satisfies Conditions 
5.1, 5.3 and $\xi_n^*=O(r_n^*)$, but is not necessarily the exact minimization of $r_n^*+\xi_n^*$. Therefore, in this case we can still use Theorem 5.1 to prove Corollaries 6.2 and 6.4.

\subsection{Proof of Theorem 7.1}
\begin{proof}



For any $M_n \rightarrow \infty$, there always exists some $\widetilde M_n$ satistfying that $1 \prec \widetilde M_n = O(M_n)$ and $\gamma_n \widetilde M_n \widetilde \varepsilon^2_n = o(1)$.

Then, for any $\theta\in \mathcal{G} \cap \{\theta: L_2^2(f_0,f_\theta) \geq\widetilde M_n\widetilde \varepsilon_n^2\}$,
\begin{align}
  &  d^2(P_\theta, P_0) \geq \int_S(1-\exp\{-(f_\theta(x)-f_0(x))^2/8\sigma_\epsilon^2\}) dP(x) \nonumber\\
  \geq &   \frac{(1-\exp\{- \gamma_n L^2_2(f_0,f_\theta)/8\sigma_\epsilon^2\}) }{ \gamma_n L^2_2(f_0,f_\theta)}\int_S(f_\theta(x)-f_0(x))^2 dP(x)\nonumber\\
  \geq& \frac{(1-\exp\{- \gamma_n L^2_2(f_0,f_\theta)/8\sigma_\epsilon^2\}) }{ \gamma_n }\kappa\nonumber\\
  \geq & \frac{(1-\exp\{- \gamma_n \widetilde M_n\widetilde \varepsilon_n^2/8\sigma_\epsilon^2\}) }{ \gamma_n }\kappa
\geq c_M \widetilde M_n\widetilde \varepsilon_n^2, \label{eq:gen}
\end{align}
for some constant $c_M >0$, where the second inequality holds since $|f_{\theta}(X) - f_0(X)|^2$ is upper bounded by $\gamma_nL^2_2(f_0, f_{\theta})$ on $\mathcal S$, and the last inequality is due to the fact that $\gamma_n\widetilde M_n\widetilde \varepsilon_n^2=o(1)$. (\ref{eq:gen}) implies
\begin{equation} \label{eq:gen2}
\mathcal{G} \cap \{ L_2^2(f_0,f_\theta) \geq \widetilde  M_n\widetilde \varepsilon_n^2\}\subset\{{d^2(P_\theta, P_0)\geq c_M \widetilde  M_n\widetilde \varepsilon_n^2}\}.
\end{equation}

By Theorem 4.1, w.h.p.,
\[
\int d^2(P_\theta, P_0) \widehat q(\theta) = O(\widetilde \varepsilon_n^2),
\]
which implies that 
\[
\int_{d^2(P_\theta, P_0)\geq c_M\widetilde M_n\widetilde \varepsilon_n^2} \widehat q(\theta) = O(1/\widetilde M_n)=o(1).
\]

Combined with (\ref{eq:gen2})
\[
\begin{split}
    &\int_{\mathcal{G} \cap \{ L_2^2(f_0,f_\theta)> M_n\widetilde \varepsilon_n^2\}} \widehat q(\theta) \leq \int_{\mathcal{G} \cap \{ L_2^2(f_0,f_\theta)> \widetilde M_n\widetilde \varepsilon_n^2\}} \widehat q(\theta) \\
    \leq &\int_{d^2(P_\theta, P_0)\geq c_M\widetilde M_n\widetilde \varepsilon_n^2} \widehat q(\theta) = O(1/\widetilde M_n)=o(1), w.h.p.
\end{split}
\]

\end{proof}



\section{Implementation}
In this section, the implementation details of ASVI are provided.

\subsection{Approximated negative ELBO}
The exact AVSI algorithm requires one to figure out $\Omega(N,s)$ and compare $\Omega(N,s)$ across different choices of $N$ and $s$. Our approximation integrates out the sparsity variable $s$ in the hierarchical modeling, i.e., we consider the prior 
\begin{equation} \label{adaprior}
\begin{split}
&\pi(N) = \frac{\lambda^N}{(e^{\lambda}-1)N!}, \mbox{ for some } N\in\mathbb{Z^+}, \\
&\pi(\gamma|N) = c_1e^{-\lambda_s\Gamma}/{H \choose \Gamma}, \mbox{ with } \Gamma=\sum^H_{i=1}\gamma_i, \mbox{ for } c_1>0, \\
&\pi(\theta_i|\gamma_i) = \gamma_i \mathcal M_0(\theta_i) + (1 - \gamma_i)\delta_0,
\end{split}
\end{equation}
where $H$ is the total number of possible connections given width multiplier $N$.
The corresponding VB family is 
\[\begin{split}
   & q(N)=\delta_{\widebar N}, \quad q(\gamma_i|N) = \mbox{Bern}(\nu_i),\\
   &q(\theta_i|\gamma_i) = \gamma_i\mathcal{M}(\theta_i)+(1-\gamma_i)\delta_{0},
\end{split}
\]
for some $\widebar N\in\mathbb{Z^+}$.

Under Gaussian slab distribution, the negative ELBO (up to a constant) corresponding to the above VB modeling is a function of $\widebar N, \mu_i, \sigma_i$ and $\nu_i$'s,
\begin{align} 
    -\Omega = &- \int \log p(\boldsymbol{D}^{(n)}|\theta, \gamma)q(\theta|\gamma)q(\gamma|\widebar N)d\theta d\gamma  \nonumber\\
    & + \sum^H_{i=1}q(\gamma_i=1)\mbox{KL}(\mathcal{N}(\theta_i;\mu_i,\sigma^2_i)\|\mathcal{N}(\theta_i;0,\sigma^2_0)) \nonumber\\
    & + \mbox{KL}(q(\gamma|\widebar N)\|\pi(\gamma|\widebar N)) - \log\pi(\widebar N). \nonumber
\end{align}

Let 
\[
\begin{split}
\mathcal L = &-\int \log p(\boldsymbol{D}^{(n)}|\theta, \gamma)q(\theta|\gamma)q(\gamma|\widebar N)d\theta d\gamma \nonumber \\ 
    & + \sum^H_{i=1}q(\gamma_i=1)\mbox{KL}(\mathcal{N}(\theta_i;\mu_i,\sigma^2_i)\|\mathcal{N}(\theta_i;0,\sigma^2_0))\nonumber \\ 
    & + \mbox{KL}(q(\gamma|\widebar N)\|\pi(\gamma | \widebar N))\\
    := &\mathcal L_1 + \mathcal L_2 + \mathcal L_3,
\end{split}
\]
and 
\begin{align} 
&-\Omega(\widebar N)=\underset{\{\mu_i, \sigma_i,\nu_i\}}{\arg\min} \mathcal L. \label{el}
\end{align}
Thus the optimal $N$ value $\widehat N$ maximizes the penalized ELBO: $\Omega_p(\widebar N)=\Omega(\widebar N)+\log\pi(\widebar N) $.

To approximate and optimize $\mathcal L$, we study each of the three terms:

i) $\mathcal L_1= - \int \log p(\boldsymbol{D}^{(n)}|\theta, \gamma)q(\theta|\gamma)q(\gamma|\widebar N)d\theta d\gamma$ requires Monte Carlo estimation. We use reparameterization trick \cite{Kingma2014VAE, Rezende2014Stochastic} for the normal slab distribution $\mathcal{M}(\theta)$, i.e., $\mathcal{M}(\theta_i)$ is equivalent in distribution to $\mu_i+\sigma_i\epsilon_i$  for $\epsilon_i \sim \mathcal N(0,1)$. Gumbel-softmax approximation \cite{Maddison17Concrete, Jang17Categorical} is employed for the binary variable $\gamma$, that is
\[\begin{split}
    &\gamma_i = 1(G_\tau(\nu_i;u_i) > 0.5),\\
    &G_\tau(\nu_i;u_i)=\frac{1}{1+\exp(-(\log \frac{\nu_i}{1-\nu_i} + \log \frac{u_i}{1-u_i})/\tau)} 
\end{split}
\]
for $u_i \sim \mathcal U(0,1)$, where $\tau$ is called the temperature and is set as 0.5 in our implementation. In back-propagation, $\gamma_i$ is used in the forward pass and $G_\tau(\nu_i;u_i)$ is used in the backward pass to compute the gradient. 
In other words, let $g(\mu_i,\sigma_i,\nu_i;\epsilon_i,u_i) = 1(G_\tau(\nu_i;u_i)>0.5)(\mu_i+\sigma_i\epsilon_i)$ and $g'(\mu_i,\sigma_i,\nu_i;\epsilon_i,u_i) = G_\tau(\nu_i;u_i)(\mu_i+\sigma_i\epsilon_i)$, then the stochastic estimator \cite{Kingma2014VAE} for $\mathcal{L}_1$ (used for forward pass) is
\begin{equation} \label{loss1}
\begin{split}
\widetilde{\mathcal{L}_1} = -\frac{n}{m}\frac{1}{K}\sum^m_{j=1}&\sum^K_{k=1}\log p(D_j|\theta^{(k)}),\\
\end{split}
\end{equation}
where $\theta^{(k)} = (\theta^{(k)}_1,\dots,\theta^{(k)}_H)',
\theta^{(k)}_i = g(\mu_i,\sigma_i,\nu_i;\epsilon_i^{(k)},u_i^{(k)})$.
$D_j$'s are randomly drawn from $D$, $\epsilon_i^{(k)}$'s and $u_i^{(k)}$'s are randomly drawn from $\mathcal N (0,1)$ and $\mathcal U (0,1)$ respectively, $n$ is the sample size, $m$ is the minibatch size and $K$ is the Monte Carlo sample size. The stochastic estimator  for $\nabla\mathcal{L}_1$ (used for backward pass) is
\begin{equation}\label{grad}
\begin{split}
&\widetilde\nabla_{\mu_i} \mathcal{L}_1 
=-\frac{n}{m}\frac{1}{K}\sum^m_{i=1}\sum^K_{k=1}\nabla_{\mu_i} \log p(D_i|\widetilde\theta^{(k)}),\\
&\widetilde\nabla_{\sigma_i} \mathcal{L}_1 
=-\frac{n}{m}\frac{1}{K}\sum^m_{i=1}\sum^K_{k=1}\nabla_{\sigma_i} \log p(D_i|\widetilde\theta^{(k)}),\\
&\widetilde\nabla_{\nu_i} \mathcal{L}_1 
=-\frac{n}{m}\frac{1}{K}\sum^m_{i=1}\sum^K_{k=1}\nabla_{\nu_i} \log p(D_i|\widetilde\theta^{(k)}).
\end{split}
\end{equation}
where $\widetilde\theta^{(k)} = (\widetilde\theta^{(k)}_1,\dots,\widetilde\theta^{(k)}_H)'$, $\widetilde\theta^{(k)}_i = g'(\mu_i,\sigma_i,\nu_i;\epsilon_i^{(k)},u_i^{(k)})$.


ii) $\mathcal L_2$ is straightforward that
\begin{align} 
&\sum^H_{i=1}q(\gamma_i=1)\mbox{KL}(\mathcal{N}(\theta_i;\mu_i,\sigma^2_i)\|\mathcal{N}(\theta_i;0,\sigma^2_0)) \nonumber\\
= &\sum^H_{i=1} \nu_i (\log \frac{\sigma_{0}}{\sigma_i} + \frac{\sigma^2_i + \mu^2_i}{2\sigma^2_{0}}- 0.5). \label{loss2}
\end{align}

iii) To compute $\mathcal L_3$, certain approximation is needed.
Denote $\Gamma^H$ as the set of all possible $\gamma = (\gamma_1, \ldots, \gamma_H)$, then
\[
\begin{split}
&\mbox{KL}(q(\gamma|\widebar N)\|\pi(\gamma|\widebar N)) \\
= &\sum_{\gamma \in \Gamma^H} \log \frac{q(\gamma_1, \ldots, \gamma_H)}{\pi(\gamma_1, \ldots, \gamma_H)} q(\gamma_1, \ldots, \gamma_H) \\
= &\sum^H_{t=0} \sum_{\Gamma=t}\log \frac{q(\gamma_1, \ldots, \gamma_H)}{\pi(\gamma_1, \ldots, \gamma_H)} q(\gamma_1, \ldots, \gamma_H)
\end{split}
\]
For the sake of fast computation, we approximate the VB distribution $q(\gamma)$ by iid Bernoulli distribution $q(\gamma)\approx \prod \widetilde{\nu}^{\gamma_i}(1-\widetilde{\nu})^{1-\gamma_i}$, where $\widetilde{\nu} = \frac{1}{H}\sum^H_{i=1}\nu_i$. Under this approximation:
\[
\begin{split}
&\sum_{\Gamma=t}\log \frac{q(\gamma_1, \ldots, \gamma_H)}{\pi(\gamma_1, \ldots, \gamma_H)}q(\gamma_1, \ldots, \gamma_H) \\
\approx&\sum_{\Gamma=t}\log \frac{\widetilde{\nu}^t (1-\widetilde{\nu})^{H-t}}{\pi(\gamma|\Gamma=t)}\widetilde{\nu}^t (1-\widetilde{\nu})^{H-t} \\
=&{H \choose t} \log \frac{\widetilde{\nu}^t (1-\widetilde{\nu})^{H-t}}{\pi(\gamma|\Gamma=t)}\widetilde{\nu}^t (1-\widetilde{\nu})^{H-t}\\
=&\log \frac{{H \choose t}\widetilde{\nu}^t (1-\widetilde{\nu})^{H-t}}{{H \choose t}\pi(\gamma|\Gamma=t)}{H \choose t}\widetilde{\nu}^t (1-\widetilde{\nu})^{H-t}\\
=&\log Pr(\Gamma=t) Pr(\Gamma=t) + \lambda_s t Pr(\Gamma=t) + C_1 \\
\end{split}
\]
where $C_1$ is some constant. Therefore, $\mbox{KL}(q(\gamma)\|\pi(\gamma)))$ is approximated by
\begin{align} 
&\sum_{\gamma \in \Gamma^H} \log \frac{q(\gamma_1, \ldots, \gamma_H)}{\pi(\gamma_1, \ldots, \gamma_H)} q(\gamma_1, \ldots, \gamma_H) \nonumber\\
= &\sum^H_{t=0} \sum_{\Gamma=t}\log \frac{q(\gamma_1, \ldots, \gamma_H)}{\pi(\gamma_1, \ldots, \gamma_H)} q(\gamma_1, \ldots, \gamma_H) \nonumber\\
=&\sum^H_{t=0} \log Pr(\Gamma=t) P(\Gamma=t) + \lambda_s \sum^H_{t=0}t Pr(\Gamma=t) + C_2  \nonumber\\
=& -\mathbb{H}(\Gamma) + \lambda_s \mathbb{E}(\Gamma) + C_2 \nonumber\\
\approx & -0.5\log_2(2\pi e \sum \nu_i(H- \sum \nu_i)/H) + \lambda_s \sum^H_{i=1}\nu_i + C_2 \nonumber\\
:= &\widetilde{\mathcal L_3}\label{loss3}
\end{align}
where $\mathbb{H}(\Gamma)$ is the entropy of $\Gamma$ and $C_2$ is some constant.

\subsection{Algorithm}

\begin{algorithm}
\caption{Adaptive sparse variational inference with normal slab distribution.} \label{alg:vb}
\begin{algorithmic}[1]
\State {Hyperparameters: $\lambda$, $\lambda_s$, $\sigma_0$}
\State {Parameters: $\mu, \sigma', \nu'$} 
\State {Candidate set of $\widebar N$: $N_A$}

\FORALLP {$\widebar N \in N_A $}
    \Repeat
        \State {$\{D_j\}^m_{j=1}$ $\gets$ Sample a minibatch of size $m$ }
        \State {$\{\epsilon^{(k)}_i\}_{1\leq k\leq K, 1\leq i\leq H}$ $\gets$ iid samples from $\mathcal N (0,1)$} 
        \State {$\{u^{(k)}_i\}_{1\leq k\leq K, 1\leq i\leq H}$ $\gets$ iid samples from $\mathcal U(0, 1)$ } 
        \State {$\widetilde{\mathcal L}$ $\gets$ (\ref{loss1}), (\ref{loss2}) and (\ref{loss3})}
        \State {$\widetilde{\nabla}_{\mu_i}\mathcal L$, $\widetilde{\nabla}_{\sigma_i}\mathcal L$, $\widetilde{\nabla}_{\nu_i}\mathcal L$ $\gets$ Gradients of $\mathcal L_2$ and $\widetilde{\mathcal L_3} $ \\
        \qquad \qquad \qquad \qquad \qquad \qquad \quad together with (\ref{grad})}
        \State {$\widetilde{\nabla}_{\sigma'_i}\mathcal L$ $\gets$ $\widetilde{\nabla}_{\sigma_i}\mathcal L \cdot \nabla_{\sigma'_i}\sigma_i$}
        \State {$\widetilde{\nabla}_{\nu'_i}\mathcal L$ $\gets$ $\widetilde{\nabla}_{\nu_i}\mathcal L \cdot \nabla_{\nu'_i}\nu_i$}
        \State {$\mu_i, \sigma'_i, \nu'_i$ $\gets$ Update with $\widetilde{\nabla}_{\mu_i}\mathcal L$, $\widetilde{\nabla}_{\sigma'_i}\mathcal L$, $\widetilde{\nabla}_{\nu'_i}\mathcal L$\\
        \qquad \qquad \qquad \qquad  using gradient descent algorithms\\
        \qquad \qquad \qquad \qquad  (e.g. RMSprop or Adam)}
    \Until{convergence of $\widetilde{\mathcal L}$}
    \State {$-\widetilde{\Omega}(\widebar N)$ $\gets$ $\widetilde{\mathcal L}$}
    \State {$-\widetilde\Omega_p(\widebar N)$$\gets$ $-\widetilde{\Omega}(\widebar N)-\log \pi(\widebar N)$ with ($\widebar N$, $\lambda$)}
\ENDFAP
\State {$\widehat{N}$ = $\arg \min_{\widebar N \in N_{A} }(-\widetilde\Omega_p(\widebar N))$}
\State \Return {$\widehat{N}$ and $(\mu, \sigma', \nu'|\widehat{N})$}
\end{algorithmic}
\end{algorithm}

An additional re-parametrization transformation for $\sigma$ and $\nu$ is used,
\[
    \sigma'_i = \log (\exp(\sigma_i)-1), \: \nu'_i = \log (\frac{1 - \nu_i}{\nu_i}),
\]
such that $\sigma_i'$ and $\nu_i'\in \mathbb{R}$.
Let $\widetilde{\mathcal L}$ and $\widetilde{\nabla} \mathcal L$
denote the working approximations of $\mathcal L$ and $\nabla \mathcal L$, then $\widetilde{\mathcal L} = \widetilde{\mathcal L_1} + \mathcal L_2 + \widetilde{\mathcal L_3}$ using 
(\ref{loss1}), (\ref{loss2}) and (\ref{loss3}). Furthermore, there exist explicit gradients of $
\mathcal L_2$ and $\widetilde{\mathcal L_3}$ with respect to $\nu'_i$, $\mu_i$ and $\sigma'_i$, which facilitates the calculation of the approximate gradient $\widetilde{\nabla} \mathcal L$ along with (\ref{grad}).



The complete adaptive sparse variational inference is described in Algorithm \ref{alg:vb}, where we use $\widetilde \Omega(\widebar N)$ and $\widetilde \Omega_p(\widebar N)$ to denote the working approximations of $\Omega(\widebar N)$ and  $\Omega_p(\widebar N)$ respectively.


\subsection{Remaining implementation details}
\subsubsection{Teacher network}
The batch size is set as $m = 1024$, and Monte Carlo size $K=1$ during training. Adam is used for optimization with a learning rate of $5\times 10^{-3}$, and the number of epochs is 7000. $\lambda_s$ is chosen as 3 ($a=0.1$) and $\lambda$ is chosen as 10, $\sigma_0$ is fixed at 0.8. 

\subsubsection{UCI datasets}
For all the datasets, the batch size is set as $m = 256$, Monte Carlo size $K$ is set as 1 during training, and Adam is used for optimization with a learning rate of $1\times 10^{-3}$. The number of epochs is 1000 for "Naval", "Power Plant" and "Protein", 2000 for "Kin8nm" and 100 for "Year". $\sigma_0$ and $\sigma_{\epsilon}$ are determined by a grid search that yields the best prediction accuracy.

\subsubsection{MNIST}
The batch size is set as $m = 512$, and Monte Carlo size $K=1$ during training. RMSprop is used for optimization with a learning rate of $5\times 10^{-3}$, and the number of epochs is 300. $\lambda_s$ is chosen as 50 ($a = 1.5$) and $\lambda$ is chosen as 600,  $\sigma_0$ is fixed at 2. MNIST data is standardized by mean of 0.1307 and standard deviation of 0.3081.

\end{document}